


\documentclass[11pt]{amsart}
\usepackage{amsmath}
\usepackage{amsthm}
\linespread{1.3}
\hoffset=-6.5mm
\textwidth=14cm
\textheight=22cm



\usepackage[T1]{fontenc}
\usepackage[sc]{mathpazo}
\usepackage{enumerate}

\usepackage{etoolbox}
\patchcmd{\section}{\scshape}{\bfseries}{}{}
\makeatletter
\makeatother

\newenvironment{nouppercase}{%
  \renewcommand{\uppercasenonmath}[1]{}}{}

\newcommand{\C}{{\mathbb{C}}}
\newcommand{\R}{{\mathbb{R}}}

\newcommand{\Z}{{\mathbb{Z}}}


\newcommand{\intersect}{\cap}


\newcommand{\conj}[1]{\overline{#1}}













\DeclareMathOperator{\Log}{Log}

\newtheorem{theorem}{Theorem}[section]
\newtheorem{proposition}[theorem]{Proposition}
\newtheorem{lemma}[theorem]{Lemma}

\newtheorem{corollary}[theorem]{Corollary}
\theoremstyle{definition}

\theoremstyle{remark}

\newtheorem*{remark*}{Remark}
\theoremstyle{example}


\newcommand{\PosOrthant}{\R_+^n}


\begin{document}
\title[~]
{Characterization of polynomials whose large powers \\
have all positive coefficients}
\author[~]
{Colin Tan and Wing-Keung To}

\address{Colin Tan, Department of Statistics \& Applied Probability, National University of Singapore, 
Block S16, 6 Science Drive 2, Singapore 117546
}
\email{statwc@nus.edu.sg}
\address{Wing-Keung To,
Department of Mathematics, National University of Singapore, Block
S17, 10 Lower Kent Ridge Road, Singapore 119076}
\email{mattowk@nus.edu.sg}

\thanks{Wing-Keung To was partially supported by the research grant
R-146-000-142-112 from the National University of Singapore and
the Ministry of Education.
}

\keywords{
polynomials, positive coefficients, strongly pseudoconvex 
}
\subjclass[2010]{
26C05, 12D99, 32T15
}
\begin{nouppercase}
\maketitle
\end{nouppercase}
\numberwithin{equation}{section}

\begin{abstract}
We give a criterion which characterizes a 
homogeneous real multi-variate polynomial to have the property that
all sufficiently large powers of the polynomial (as well as their products with any given positive 
homogeneous polynomial) have all positive coefficients.  Our result generalizes a result of De Angelis, which 
corresponds to 
the case of homogeneous bi-variate polynomials, as well as a classical result of P\'olya, which corresponds to 
the case of a specific linear polynomial.  As an application, we also give a characterization of certain 
polynomial spectral radius functions of the defining matrix functions of Markov chains.
\end{abstract}

\section{Introduction and main results} \label{sec: intro}

Positivity conditions for polynomials with real coefficients are relevant in several branches of pure and 
applied mathematics, including real algebraic geometry, convex geometry, probability theory and 
optimization, and have been extensively studied (see e.g. \cite{CD97, Deangelis94, 
Deangelis03, Han86, MT93, Polya28, Reznick95, TY06}  and the references therein). An important class of polynomials are those whose coefficients are positive. 

De Angelis \cite{Deangelis03} characterized those univariate polynomials $p$ such that 
	$p^m$ has all positive coefficients for all sufficiently large $m$
								in terms of certain positivity conditions on $p$ itself.
As an application, 
    he obtained a characterization of certain univariate polynomials $p$ 
        for which there exists an irreducible (or aperiodic) Markov chain whose defining matrix has $p$ as its 
        spectral radius function \cite[Theorem 6.7]{Deangelis942}. This spectral radius function
         is an important invariant in the study of Markov shifts
                (see e.g. \cite{MT93}).  As such, it is interesting and natural to ask whether similar results hold in the
                multivariate setting.
																			

In this paper, we generalize both the afore-mentioned results of De Angelis 
to the case of homogeneous multivariate polynomials.  Let $n \ge 1$.
A homogeneous polynomial $f= \sum_{|I| = d} c_{I} x^I \in \R[x_1\dots,x_n]$ 
	with real coefficients and of degree $d$ is said to 
		{\emph{have all positive coefficients}} if $c_I > 0$ for all $|I| = d$.
Here $I = (I_1,\ldots, I_n)$ is a multi-index
			of length $|I| := I_1 + \cdots + I_n$ and
				$x^{I} = x_1^{I_1} x_2^{I_2} \cdots x_n^{I_n}$.
Next we let
$
\mathbb{R}_+^{n} := \{ x=(x_1,\ldots,x_n)\in \R^{n } \, \big| \, \, x_i\geq 0,~i=1,\ldots,n\}
$
	denote the closed positive orthant in the real Euclidean space $\R^{n}$ (for simplicity, we also write 
	$\mathbb{R}_+:= \mathbb{R}_+^{1}$).
The circle group $U(1) := \{e^{i\theta} \, \big| \, \theta\in \R\}$ acts via pointwise multiplication on the
	complex Euclidean space $\C^{n}$, given by
		$e^{i\theta} \cdot z := (e^{i\theta} z_1 ,\dots, e^{i\theta} z_n)$ 
					for $e^{i\theta} \in U(1)$ and $z = (z_1, \dots, z_n) \in \C^{n } $.
The $U(1)$-invariant subset of $\C^n$ generated by $\PosOrthant$ is given by
$
U(1) \cdot \PosOrthant := \{ e^{i\theta} \cdot x \, \big| \, e^{i\theta} \in U(1), ~x \in \PosOrthant\}$.
For $k = 1,\ldots , n$, we denote the $k$-th facet of $\PosOrthant$ by	
$
F_k(\PosOrthant) := \{ x=(x_1,\dots,x_n) \in \PosOrthant \, | \,  x_k = 0\}$. 

\medskip
Our main result in this paper is the following:

\begin{theorem} \label{thm: mainThm}
Let $p \in \R[x_1,\dots,x_n]$ be a nonconstant homogeneous polynomial.
The following two statements are equivalent:
\begin{enumerate}[(a)]
\item \label{thm: Condition}
$p$ satisfies the following three conditions:
\begin{description}
\item[(Pos1)] \label{Pos1} $p(1, 0, \ldots, 0), p(0,1, 0, \ldots, 0), \dots , p(0, \ldots, 0, 1) > 0$.
\item[(Pos2)] \label{Pos2} For all $k  =1, \ldots, n$,
								 $\displaystyle\frac{\partial p}{ \partial x_k} (x) > 0$ 
  											for all  $ x \in F_k(\PosOrthant) \setminus \{0\}$, 
\item[(Pos3)]  \label{Pos3} $|p(z)| < p(|z_1|, \ldots, |z_n|)$ for all 
$z=(z_1,\dots,z_n) \in \C^n \setminus (U(1) \cdot \PosOrthant)$.
\end{description}

\item \label{thm: EventualPos} For each homogeneous polynomial $q \in\R[x_1,\dots,x_n]$
				such that $q(x) > 0$ whenever $x \in \mathbb{R}_+^{n} \setminus \{0\}$,
					there exists $m_o > 0$ such that for each integer $m \ge  m_o$, 
								\, $p^m \cdot q$ has all positive coefficients.
\end{enumerate}
\end{theorem}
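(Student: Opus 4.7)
My plan is to prove the two implications separately. The direction (b) $\Rightarrow$ (a) is relatively routine, while (a) $\Rightarrow$ (b) carries the main content.

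For (b) $\Rightarrow$ (a), I would specialize to the constant polynomial $q \equiv 1$, so that $p^m$ itself has all positive coefficients for $m \ge m_o$. The pure-power coefficient $[x_k^{md}](p^m) = p(e_k)^m > 0$ forces $p(e_k) > 0$, yielding (Pos1). The sharp triangle inequality for polynomials with all positive coefficients gives $|p^m(z)| < p^m(|z_1|, \ldots, |z_n|)$ for $z \notin U(1) \cdot \mathbb{R}_+^n$; taking $m$-th roots produces (Pos3). For (Pos2), note that $\partial p^m / \partial x_k$ inherits positivity of coefficients from $p^m$ and is therefore strictly positive on $\mathbb{R}_+^n \setminus \{0\}$; combining this with the identity $\partial p^m/\partial x_k = m p^{m-1} \, \partial p/\partial x_k$ and the positivity of $p$ on $\mathbb{R}_+^n \setminus \{0\}$ (itself a consequence of $p^m$'s positive coefficients) yields $\partial p / \partial x_k > 0$ on $F_k(\mathbb{R}_+^n) \setminus \{0\}$.

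For (a) $\Rightarrow$ (b), the approach is a saddle-point analysis of the Cauchy integral
\begin{equation*}
[x^I](p^m q) \;=\; \frac{1}{(2\pi)^n} \int_{[0,2\pi]^n} \frac{p^m(r_1 e^{i\theta_1},\ldots,r_n e^{i\theta_n}) \, q(r_1 e^{i\theta_1},\ldots,r_n e^{i\theta_n})}{r_1^{I_1}\cdots r_n^{I_n} \, e^{i(I_1\theta_1+\cdots+I_n\theta_n)}} \, d\theta_1 \cdots d\theta_n,
\end{equation*}
valid for any radii $r = (r_1, \ldots, r_n) \in \mathbb{R}_+^n$ with all $r_k > 0$. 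First I would prove as a preliminary lemma that (Pos1)--(Pos3) jointly imply $p > 0$ on $\mathbb{R}_+^n \setminus \{0\}$. Then by (Pos3) the modulus of the integrand attains its strict maximum on the diagonal $\theta_1 = \cdots = \theta_n$, where the integrand itself is a positive real number. For radii $r = r(I)$ chosen to satisfy the saddle-point conditions $I_k \approx m \, r_k \, (\partial p / \partial x_k)(r) \, / \, p(r)$, a Laplace-method estimate shows that the integral is asymptotically given by a positive Gaussian factor times $p(r)^m q(r) / r^I$, and hence is positive for $m$ sufficiently large.

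The central obstacle is making this positivity uniform in $I$ as $|I| = m \deg p + \deg q \to \infty$. So long as every ratio $I_k/|I|$ remains bounded below by a positive constant, the saddle radii stay compactly in the interior of $\mathbb{R}_+^n$ and a continuity/compactness argument on the probability simplex of normalized directions supplies uniformity. The delicate case is when some $I_k/|I| \to 0$: then the saddle radius $r_k(I)$ tends to $0$, the saddle migrates to the facet $F_k(\mathbb{R}_+^n)$, and the Gaussian approximation degenerates in the $k$-th direction. This is precisely where condition (Pos2) becomes essential, since the transverse positivity of $\partial p / \partial x_k$ on $F_k(\mathbb{R}_+^n) \setminus \{0\}$ restores the non-degeneracy needed to conclude positivity. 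I would expect to unify the interior and boundary regimes by induction on $n$, since $p$ restricted to any facet inherits the analogous (Pos1)--(Pos3) in $n-1$ variables.
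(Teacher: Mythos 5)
Your direction (b) $\Rightarrow$ (a) is essentially the paper's own argument (specialize to $q=1$, transfer positivity properties from $p^m$ to $p$, and use $\partial(p^m)/\partial x_k = m\,p^{m-1}\,\partial p/\partial x_k$ for (Pos2)); the only caveat is that you must choose $m$ odd (as the paper does) or argue via connectedness before extracting $m$-th roots, since $p(e_k)^m>0$ and $|p(z)|^m<p(|z_1|,\ldots,|z_n|)^m$ do not determine the sign of $p(e_k)$ or of $p(|z_1|,\ldots,|z_n|)$ when $m$ is even. That is a patchable detail.

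The substantive issue is (a) $\Rightarrow$ (b). Your route is genuinely different from the paper's: the paper associates to $p$ the Hermitian bihomogeneous polynomial $P(z,\conj{w})=p(z_1\conj{w_1},\ldots,z_n\conj{w_n})$, shows that (Pos1)--(Pos3) force $P$ to be positive on $\C^n\setminus\{0\}$, the domain $\{P<1\}$ to be strongly pseudoconvex, and $P$ to satisfy the strong global Cauchy--Schwarz inequality, and then invokes the Catlin--D'Angelo Positivstellensatz, using the equivalence between ``$p^mq$ has all positive coefficients'' and ``$P^mQ$ is a maximal squared norm.'' Your plan is instead the direct multivariate extension of De Angelis' saddle-point/Laplace method applied to the Cauchy integral for $[x^I](p^mq)$. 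The interior regime of your analysis (all $I_k/|I|$ bounded below) is plausible: (Pos3) localizes the modulus of the integrand to the diagonal circle, homogeneity makes the diagonal contribution equal to $p(r)^mq(r)>0$, and the transverse Hessian positivity needed for the Gaussian factor is exactly the positive definiteness of the log-Hessian matrix that the paper extracts from De Angelis' Theorem 6.11. But the degenerate regime is a genuine gap, not a deferred technicality. When $I_k/|I|\to 0$ the saddle radius $r_k(I)\sim I_k/m$ tends to $0$, the integrand's modulus becomes nearly independent of $\theta_k$, and the Laplace approximation in that variable fails outright; the asymptotics of such edge coefficients are controlled by derivative data along the facet, not by a Gaussian integral. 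Neither of your proposed fixes closes this: induction on $n$ via restriction to facets (which do inherit (Pos1)--(Pos3)) only controls the coefficients with $I_k=0$ exactly, saying nothing about $I_k=1,2,\ldots$ or $I_k=o(|I|)$; and the claim that (Pos2) ``restores the non-degeneracy'' is an assertion in place of the needed uniform-in-$I$ lower bound across the crossover between facet and interior directions. That uniform matching is precisely the hard technical core---De Angelis needed a substantial analysis even for $n=2$---and avoiding it is the reason the paper routes through Catlin--D'Angelo. As it stands, your (a) $\Rightarrow$ (b) is a reasonable program rather than a proof.
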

The implication $\eqref{thm: Condition} \implies \eqref{thm: EventualPos}$ may be regarded as a 
Positivstellensatz for those homogeneous polynomials $q$ which are 
strictly positive on $\mathbb{R}_+^{n} \intersect \{p = 1\}$, as the latter condition is certified by 
	the algebraic property that $p^m \cdot q $ has all positive coefficients for some $m \ge 1$.

The bulk of our proof of the implication $\eqref{thm: Condition} \implies \eqref{thm: EventualPos}$ 
consists of showing that a certain  Hermitian bihomogeneous polynomial $P$ associated to $p$ satisfies the sufficient conditions of a Hermitian Positivstellensatz of Catlin-D'Angelo 
\cite{CD99} (see also Theorem \ref{thm: CDThm} below), which enables us to apply the latter result.  As mentioned above, Theorem 
\ref{thm: mainThm} for the case of $n = 2$ dehomogenizes to De Angelis' 
Positivstellensatz \cite[Theorem 6.6]{Deangelis03}. Thus our proof settles affirmatively 
a debate in MathOverflow \cite{Ere14} on whether De Angelis' Positivstellensatz is a consequence of 
Catlin-D'Angelo's Positivstellensatz. 

Several examples of homogeneous real polynomials $p$ which satisfy the three positivity conditions in 
$\eqref{thm: Condition}$ can be found in the literature. 
A classical example is the linear form $p = x_1 + \cdots + x_n$; in this case, 
Theorem \ref{thm: mainThm} is 
the Positivstellensatz of P\'olya on the simplex (\cite{Polya28}).  A more general example is given by any homogeneous polynomial $p$ which has all positive coefficients.
A different kind of example is the polynomial 
\begin{align}\label{eq: plambda}
p_{\lambda}(x_1, x_2) := (x_1+x_2)^{2k} - \lambda x_1^kx_2^k\quad\text{with  }
\binom{2k}{k}<\lambda < 2^{2k - 1}\text{ and }k \ge 2
\end{align}
given by D'Angelo-Varolin in \cite[Theorem 3]{DV04},
	for which the coefficient of $x_1^kx_2^k$ in $p_\lambda$
 is negative (we will skip the verification that $p_\lambda$ satisfies the three 
positivity conditions in $\eqref{thm: Condition}$, which is similar to the calculations given in 
\cite{DV04}).

The three positivity conditions in $\eqref{thm: Condition}$ are independent, in the sense that, any 
two of these conditions do not imply the third one.  Consider the following polynomials (with $n\geq 2$):
\begin{align}\label{eq: example1}
& (x_1 + \cdots + x_n)^3 - x_1^3,\\
\label{eq: example2}
& x_1^2(x_1 + \cdots + x_n) + (x_2 + \cdots + x_n)^3,\\
\label{eq: example3}
&(x_1 + \cdots + x_n)^4 - 8 x_1^2 x_2^2.
\end{align}
As the reader can verify easily, \eqref{eq: example1} satisfies (Pos2) and (Pos3) but violates (Pos1) at the point 
$(1, 0, \ldots, 0)$, \eqref{eq: example2}  satisfies (Pos1) 
and (Pos3) but violates (Pos2) on the facet $F_1(\PosOrthant)$, and \eqref{eq: example3} 
satisfies (Pos1) and (Pos2) but violates (Pos3) at the point $(-1, 1, 0, 0, \ldots, 0)$.

As pointed out by the referee, 
	in the special case when $p = x_1 + \cdots + x_n$
		and $q$ is as in Theorem \ref{thm: mainThm}\eqref{thm: EventualPos},
	the result of Halfpap-Lebl in \cite{HL13} yields a lower bound for $m_o$ 
		 in terms of the signature of $q$.	 		

In view of Theorem \ref{thm: mainThm}, it is natural and interesting to ask for a similar characterization of homogeneous 
polynomials whose large powers have all nonnegative coefficients.  It appears to the authors 
that the method in this paper does not generalize readily to handle such borderline case, and new ideas are 
needed
to tackle the problem.
To illustrate the subtlety of 
this problem, we mention that the polynomial in \eqref{eq: example3} (which,
in the case when $n=2$, corresponds to a
limiting case of the family of polynomials in \eqref{eq: plambda} with $k=2$ and $\lambda=8$) satisfies 
a weaker version of (Pos3) (with \lq$<$\rq~there replaced by  \lq$\leq$\rq), but it is easy to check that 
none of its powers has all nonnegative coefficients.



\medskip

Let $\Z_+:=\{k\in\Z\,\big|\, k\geq 0\}$, and denote by $\Z_+[x_1,\ldots, x_n]$ the 
semiring of polynomials in $x_1,\ldots, x_n$ with coefficients in $\Z_+$.   
Let $A$ be an {\it irreducible } (resp. {\it aperiodic}) 
square matrix 
over $\Z_+[x_1,\ldots, x_n]$
(see e.g.\ Section \ref{sec: Application} for the definitions).
Denote the spectral radius function of 
$A$ by $\beta_A=\beta_A(x_1,\ldots, x_n)$ .  

As an application of Theorem \ref{thm: mainThm}, we have

\begin{corollary} \label{cor: betaApplication}
Let $p \in \Z[x_1,\dots,x_n]$ be a homogeneous polynomial which satisfies (Pos1) and (Pos2).
The following statements are equivalent:
\begin{enumerate}[(i)]
\item \label{cor: pos3}$p$ satisfies (Pos3).
\item \label{cor: irr} $p = \beta_A$ for some irreducible square matrix $A$ over $\Z_+[x_1,\dots,x_n]$.
\item \label{cor: aper} $p = \beta_A$ for some aperiodic square matrix $A$ over 
$\Z_+[x_1,\dots,x_n]$.
\end{enumerate}
\end{corollary}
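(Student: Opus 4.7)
My plan is to close a cycle of implications (i) $\Rightarrow$ (iii) $\Rightarrow$ (ii) $\Rightarrow$ (i); the middle implication holds by definition since aperiodic matrices are irreducible. The two genuine steps are a constructive proof of (i) $\Rightarrow$ (iii), which uses Theorem \ref{thm: mainThm} to exhibit an aperiodic companion-type matrix, and a Perron--Frobenius argument for (ii) $\Rightarrow$ (i), which extracts (Pos3) from the existence of any irreducible realization.

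For (i) $\Rightarrow$ (iii), apply Theorem \ref{thm: mainThm} with $q \equiv 1$ to obtain $m_o \ge 1$ such that $p^m$ has all positive integer coefficients for every $m \ge m_o$. Fix $j \ge m_o$ and choose $N > j$ with $\gcd(j, N) = 1$ and $N - j \ge m_o$ (e.g.\ $N = k\, j + 1$ for $k$ large); write $d := \deg p$. Set
\[
a_j := p^j - \sum_{k=1}^n x_k^{jd}, \qquad a_N := p^{N-j} \cdot \sum_{k=1}^n x_k^{jd}.
\]
Condition (Pos1) forces the coefficient of $x_k^{jd}$ in $p^j$ to equal $p(e_k)^j \ge 1$, so $a_j$ has nonnegative integer coefficients; $a_N$ lies in $\Z_+[x_1, \ldots, x_n]$ as a product of elements with nonnegative integer coefficients. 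Moreover (Pos2) implies that some mixed monomial of $p$ has positive coefficient (e.g.\ the coefficient of $x_k\, x_l^{d-1}$ equals $\partial p / \partial x_k(e_l) > 0$ for $l \neq k$), so $p^j$ carries positive mixed-monomial coefficients and $a_j \neq 0$. By construction $p^N = a_j\, p^{N-j} + a_N$, so $p$ is a root of
\[
P(t) := t^N - a_j\, t^{N-j} - a_N \; \in \; \Z_+[x_1, \ldots, x_n][t].
\]
Let $A$ be the companion matrix of $P(t)$. Its directed graph consists of the Hamiltonian path $1 \to 2 \to \cdots \to N$ together with the two return arrows $j \to 1$ and $N \to 1$; hence $A$ is irreducible, and since its cycle-length set is $\{j, N\}$ with $\gcd(j, N) = 1$, it is aperiodic. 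Finally, for each $x \in \R_+^n \setminus \{0\}$, elementary calculus shows $P(\,\cdot\,; x)$ has a unique positive real root, which must by Perron--Frobenius coincide with the spectral radius of $A(x)$; since $p(x)$ is such a root, $\beta_A(x) = p(x)$, so $\beta_A = p$.

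For (ii) $\Rightarrow$ (i), let $A$ be irreducible over $\Z_+[x_1, \ldots, x_n]$ with $\beta_A = p$, and fix $z \in \C^n$. Since the characteristic polynomial identity $\chi_A(p(z); z) = 0$ holds on $\R_+^n$ and both sides are polynomials in $z$, it extends to all of $\C^n$; in particular $p(z)$ is an eigenvalue of $A(z)$. Pick an eigenvector $\mathbf{v}$; the entrywise triangle inequality (using that each $A_{ij}$ has nonnegative coefficients) applied to $A(z)\mathbf{v} = p(z)\mathbf{v}$ yields $|p(z)|\,|\mathbf{v}| \le A(|z|)\,|\mathbf{v}|$ entrywise, and the Collatz--Wielandt principle for the irreducible nonnegative matrix $A(|z|)$ delivers $|p(z)| \le \rho(A(|z|)) = p(|z|)$. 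The main obstacle, and the technical heart of the argument, is the strict inequality for $z \notin U(1) \cdot \R_+^n$: equality would force $|\mathbf{v}|$ to be the strictly positive Perron eigenvector of $A(|z|)$ and would collapse all the triangle inequalities along each row of $A$ to equalities; tracing these equalities through the cycles of the digraph of $A$, and using the homogeneity of each entry $A_{ij}$ to convert the scalar equality $|A_{ij}(z)| = A_{ij}(|z|)$ into linear conditions on the arguments $\arg z_l$, one needs to deduce that irreducibility forces $\arg z_1 \equiv \cdots \equiv \arg z_n \pmod{2\pi}$, that is $z \in U(1) \cdot \R_+^n$, a contradiction.
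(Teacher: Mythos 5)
Your implication (i) $\Rightarrow$ (iii) is essentially correct and takes a genuinely different, more self-contained route than the paper: you build an explicit $N\times N$ companion matrix for $t^N-a_jt^{N-j}-a_N$ with $a_j=p^j-\sum_k x_k^{jd}$, $a_N=p^{N-j}\sum_k x_k^{jd}$, whereas the paper takes the $1\times 1$ matrix $(p^m)$ and then invokes De Angelis' nontrivial result that a polynomial root of a beta function is again a beta function (Lemma \ref{lem: DeangelisOnRootOfBetaFns}); your construction avoids that lemma altogether. Two small repairs: take $j\ge\max(m_o,2)$ so that $p^j$ genuinely contains a mixed monomial and $a_j\neq 0$ (your argument can fail when $p=x_1+\cdots+x_n$ and $j=1$), and note that with the paper's definition of aperiodicity (gcd of return times equal to $1$ for each state, with no strong connectivity required) an aperiodic matrix need not be irreducible, so your claim that (iii) $\Rightarrow$ (ii) holds ``by definition'' is not available; this is harmless only because your companion matrix is visibly both irreducible and aperiodic, so you could prove (i)$\Rightarrow$(ii) and (i)$\Rightarrow$(iii) simultaneously.

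The genuine gap is in (ii) $\Rightarrow$ (i). The weak bound $|p(z)|\le p(|z_1|,\ldots,|z_n|)$ via the Collatz--Wielandt characterization is fine, but the strict inequality is the entire content of (Pos3), and the equality analysis you sketch cannot be completed as stated: your argument uses only irreducibility of $A$, and irreducibility alone does not force $z\in U(1)\cdot\R_+^n$ in the equality case. Concretely, $A=(x_1x_2)$ is an irreducible and aperiodic $1\times1$ matrix over $\Z_+[x_1,x_2]$ with $\beta_A=x_1x_2$, yet $|z_1z_2|=|z_1|\,|z_2|$ for every $z$, so equality holds everywhere off $U(1)\cdot\R_+^2$. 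Hence any proof of strictness must use the standing hypotheses (Pos1)--(Pos2), which your sketch never does; in the paper they enter exactly here, via Lemma \ref{lem: Zspan}(ii) (the differences of exponents in the support of the dehomogenization $p_{n-1,\mathrm{Id}}$ generate $\Z^{n-1}$), and it is this lattice condition, fed into De Angelis' theorem (Lemma \ref{lem: deAngelisStarCondition}), that excludes equality. Relating the support of $\beta_A$ to the supports of the entries and cycle products of $A$ is the nontrivial content of that theorem, not a routine tracing of collapsed triangle inequalities (note also that the entries $A_{ij}$ need not be homogeneous, and that for $n\ge3$ the point $z$ may have a zero coordinate, in which case $A(|z|)$ need not be irreducible and $|\mathbf{v}|$ need not be a strictly positive Perron vector, so even the first step of your equality analysis breaks). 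As written, therefore, (ii) $\Rightarrow$ (i) (and likewise (iii) $\Rightarrow$ (i)) is not proved; the paper closes this step by dehomogenizing with respect to a nonvanishing coordinate and quoting Lemma \ref{lem: deAngelisStarCondition}.
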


As mentioned earlier, the case of $n =2$ in Corollary \ref{cor: betaApplication} dehomogenizes to 
De Angelis' result \cite[Theorem 6.7]{Deangelis942}.
We refer the reader to Section \ref{sec: Application} for the interpretation of Corollary \ref{cor: betaApplication}
	in terms of Markov chains.

De Angelis' Positivstellensatz \cite[Theorem 6.6]{Deangelis03} has been applied by Bergweiler-Eremenko \cite{BE15}
	to study the distribution of zeros of polynomials with positive coefficients 
		(see also \cite{EF}).
An effective version of P\'{o}lya's Positivstellensatz by Powers-Reznick \cite{PR09} was applied by 
Schweighofer \cite{Schweighofer04} to obtain complexity bounds on a Positivstellensatz of 
Schm\"{u}dgen \cite{Schmudgen91},
		and by de Klerk-Pasechnik \cite{DP02} to
				estimate the rate of convergence of a certain hierarchy of conic linear programs to the 
				stability number of a graph. 			
As a generalization of the Positivstellensatze of De Angelis and P\'olya,
Theorem \ref{thm: mainThm}
  may also have similar applications,
  	which will not pursued here.

The organization of this paper is as follows. In Section \ref{sec: CDThm}, we recall 
some background material
on
bihomogeneous polynomials. 
In Section \ref{sec: positivity}, we relate some positivity properties of a homogeneous real
polynomial with those of its associated bihomogeneous polynomial.
In Section \ref{sec: proofOfTheorem}, we give the proof of Theorem \ref{thm: mainThm}.  
In Section \ref{sec: Application}, we give the deduction of Corollary \ref{cor: betaApplication}.
 
 \medskip
\textbf{Acknowledgements.}
The first author would like to thank Ser Peow Tan, Yan Loi Wong and Xingwang Xu 
	for suggesting to work in the direction of this problem
		and is grateful for his wife's encouragement to complete this article.
The authors also acknowledge 
	John P. D'Angelo,
	Valerio De Angelis,
	Alexandre Eremenko, 
	David Handelman,
	and John Jiang
	    for sharing their work and for helpful discussions.
	   The authors are grateful to the referee for numerous comments and suggestions leading to the paper in its present form.

\section{Bihomogeneous polynomials and Catlin-D'Angelo's Positivstellensatz}
	\label{sec: CDThm}

In this section, we recall some background material regarding bihomogeneous polynomials, 
which is mostly taken from \cite{CD97, CD99, DAngelo02, Var08}.  
Throughout this section, we fix a positive integer $n\geq 2$.    
Denote by $\C[z_1, \ldots, z_n, \conj{w_1}, \ldots, \conj{w_n}]$
	the complex polynomial algebra in the indeterminates $z_1, \ldots, z_n, \conj{w_1}, \ldots, \conj{w_n}$.  
For $d\geq 0$, a polynomial $P\in \C[z_1, \ldots, z_n, \conj{w_1}, \ldots, \conj{w_n}]$ is said to be {\emph{bihomogeneous of 
bidegree $(d, d)$} if 
\begin{equation}\label{Pzw}
P(\zeta z,\conj{\mu} \conj{w})=\zeta^d\conj{\mu}^d  P(z,\conj{w})
\end{equation}
for all $\zeta,\mu\in \C$ and $z=(z_1, \ldots, z_n),~w=(w_1, \ldots, w_n)\in \C^n$.
Such $P$ is said
 to be {\emph{Hermitian}}  if 
$
\conj{P(z, \conj{w})} = P(w, \conj{z})$ for all $z, w\in \C^{n}$.   Furthermore,
$P$ is said to be {\it positive on }$\C^{n}\setminus\{0\}$ if 
$P(z, \conj{z}) > 0$ for all $ z \in \C^{n}\setminus\{0\}$.

\medskip
 For $d\geq 0$, we denote by $ \C[z_1, \ldots, z_n]_d$ the complex vector space  
		of homogeneous holomorphic polynomials in $\C^n$ of degree $d$. 
		A Hermitian bihomogeneous polynomial $P$ is said to be a {\emph{maximal squared norm}} 
if there exists a basis 
$\{g_1,\dots, g_N\}$ of $ \C[z_1, \ldots, z_n]_d$ (with $N= \dim_{\C} \C[z_1, \ldots, z_n]_d = \binom{d + n - 1}{n - 1}$) such that 
\begin{equation}\label{eq: sosP}
P(z, \conj{w}) = \sum_{k = 1}^N g_k(z) \cdot \conj{g_k(w)}\quad\text{for all }z,w\in\C^n
\end{equation}
(so that $ P(z, \conj{z})= \sum_{k = 1}^N |g_k(z)|^2$ for all $z\in\C^n$).    

\medskip	
From \eqref{Pzw}, one easily sees that a
Hermitian bihomogeneous polynomial $P\in \C[z_1, \ldots, z_n, \conj{w_1}, \ldots, \conj{w_n}]$ of bidegree
$(d,d)$ may be regarded as a Hermitian form on the dual vector space of $ \C[z_1, \ldots, z_n]_d$.  In
particular, with respect to any basis $\{h_1,\dots, h_N\}$ of $ \C[z_1, \ldots, z_n]_d$, there exists a unique $N\times N$ 
Hermitian 
matrix $C=\big(
c_{k\conj{l}})_{1\leq k,l\leq N}$ such that 
\begin{equation}\label{eq: matrixC}
P(z,\conj{w})=\sum_{1\leq k,l\leq N}c_{k\conj{l}}h_k(z)\conj{h_l(w)}
\end{equation}
for all $z,w\in\C^n$.  It is easy to see that $P$ is a maximal squared norm
if and only if its associated matrix $C=\big(
c_{k\conj{l}})$ with respect to some (and hence any) basis of $ \C[z_1, \ldots, z_n]_d$ is positive definite.  
Note that a 
Hermitian bihomogeneous polyomial positive on $\C^n\setminus\{0\}$ 
	need not be a maximal squared norm.

\medskip
Following \cite{CD99}, 
a Hermitian bihomogeneous polynomial $P$ is said to satisfy
	{\emph{the strong global Cauchy-Schwarz (in short, SGCS) inequality}}
	if 
	\begin{equation}\label{eq: SGCS}|P(z, \conj{w})|^2 < P(z, \conj{z}) P(w, \conj{w})
	 	\quad\text{for all linearly independent }z, w\in \C^{n},
	 	\end{equation}
	 	i.e., the above inequality holds whenever $z$ and $w$ are not scalar multiples of each other.
(Note that the Hermitian bihomogeneity of $P$ implies that 
$|P(z, \conj{w})|^2 = P(z, \conj{z}) P(w, \conj{w})$ whenever $z$ and $w$ are linearly dependent.)  
We recall the following result of Catlin-D'Angelo:

\begin{theorem}[{\cite[Theorem 1, Corollary and its proof]{CD99}}]\label{thm: CDThm}
Let $P \in \C[z_1, \ldots, z_n, \conj{w_1}, \ldots, \conj{w_n}]$ be a non-constant Hermitian bihomogeneous polynomial such that 
(i) $P$ is positive on $\C^n\setminus\{0\}$, (ii) the domain $\{z \in \C^{n} : P(z, \conj{z}) < 1\}$
		is strongly pseudoconvex, and (iii) $P$ satisfies the SGCS inequality.  
		Then for each Hermitian bihomogeneous polynomial $Q \in \C[z_1, \ldots, z_n, \conj{w_1}, \ldots, \conj{w_n}]$
			positive on $\C^n \setminus\{0\}$,
        there exists $m_o > 0$ such that for each integer $m \ge m_o$,
            $P^m \cdot Q$ is a maximal squared norm.
\end{theorem}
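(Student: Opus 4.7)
The plan is to follow the Bergman/Szeg\H{o} kernel strategy of Catlin and D'Angelo, reducing the algebraic statement that $P^m\cdot Q$ is a maximal squared norm to positive definiteness of a sequence of Hermitian integral operators, and then extracting positivity for large $m$ from a semiclassical asymptotic expansion on the boundary of the strongly pseudoconvex domain $\Omega := \{z \in \C^n : P(z,\conj z) < 1\}$.

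First I would reformulate the conclusion as operator positivity. Let $2d = \deg P$ and $2e = \deg Q$, and write $P^m \cdot Q$ in the monomial basis of $\C[z_1,\dots,z_n]_{md+e}$ with Hermitian coefficient matrix $C_m$; by the remarks following \eqref{eq: matrixC}, $P^m \cdot Q$ is a maximal squared norm if and only if $C_m$ is positive definite. To test $C_m$, I would introduce an integral operator on holomorphic polynomials by
\begin{equation*}
(\integralOperator{P^m\cdot Q}{N} f)(z) \;:=\; \int_{\round\Omega} P(z,\conj w)^m\, Q(z,\conj w)\, f(w)\, d\sigma(w),
\end{equation*}
where $d\sigma$ denotes surface measure. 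The $U(1)$-equivariance of $P$ and $Q$ makes this operator respect the grading by degree, and its matrix on $\C[z_1,\dots,z_n]_{md+e}$ coincides, up to an explicit positive diagonal factor arising from monomial $L^2$-norms, with $C_m$. Positive definiteness of $C_m$ therefore reduces to positive definiteness of this integral operator on the graded piece of degree $md+e$.

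Next I would invoke the microlocal analysis of the Szeg\H{o} projector. Since $\round\Omega$ is smooth and strongly pseudoconvex by hypothesis (ii), the Boutet de Monvel--Sj\"ostrand--Fefferman theory represents the Szeg\H{o} kernel as a Fourier integral operator of Heisenberg type with a complex phase determined by the Levi form of $\round\Omega$. Hypothesis (iii), the SGCS inequality, ensures that $|P(z,\conj w)|$ strictly dominates off the diagonal, so the kernel $P(z,\conj w)^m$ concentrates along the diagonal with width $O(m^{-1/2})$. A stationary-phase argument then yields an asymptotic expansion
\begin{equation*}
\bigl\langle \integralOperator{P^m\cdot Q}{N} f,\, f \bigr\rangle \;=\; c\, m^{n-1} \int_{\round\Omega} Q(z,\conj z)\,|f(z)|^2\, d\sigma(z) \;+\; O\!\bigl(m^{n-2}\bigr)\|f\|^2,
\end{equation*}
where $c > 0$ is determined by the Levi form of $\round\Omega$. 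Because $Q$ is positive on $\C^n \setminus \{0\}$, the leading term is strictly positive on every nonzero $f$, so $\integralOperator{P^m\cdot Q}{N}$, and hence $C_m$, is positive definite for all sufficiently large $m$.

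The main obstacle is the analytic step: rigorously establishing the semiclassical expansion and verifying positivity of the leading coefficient. Strong pseudoconvexity is used essentially both to invoke Kohn's regularity for the $\bar\partial$-Neumann problem, which legitimizes the Szeg\H{o} projector as a Fourier integral operator with a well-behaved complex phase, and to ensure that the Levi form contributes a strictly positive factor to the stationary-phase coefficient. Without strong pseudoconvexity the Szeg\H{o} kernel need not admit a clean expansion, and without the SGCS inequality the kernel $P^m$ would not concentrate along the diagonal, so in either case the positivity of the leading term could fail to be extracted and the conclusion would break down.
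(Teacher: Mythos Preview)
The paper does not prove this theorem at all: it is quoted from \cite{CD99} and used as a black box. So there is no in-paper argument to compare against, and your sketch is in effect a summary of the original Catlin--D'Angelo proof rather than an alternative to anything in the present paper.

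As such a summary, your outline is broadly faithful to \cite{CD99}: reduce ``maximal squared norm'' to positive definiteness of an integral operator with kernel $P^m Q$, use strong pseudoconvexity of $\partial\Omega$ to access Boutet de Monvel--Sj\"ostrand asymptotics, use SGCS to force diagonal concentration of $P(z,\conj w)^m$, and read off positivity of the leading coefficient from $Q(z,\conj z)>0$. Two points deserve tightening. First, your claim that the matrix of $\integralOperator{P^m\cdot Q}{N}$ in the monomial basis agrees with $C_m$ ``up to an explicit positive diagonal factor'' is not correct in general: the monomials $z^I$ are \emph{not} orthogonal on $\partial\Omega$ unless $\Omega$ happens to be Reinhardt, so what you actually get is $G\,C_m\,G^{*}$ with $G$ the (positive definite) Gram matrix of the monomials in $L^2(\partial\Omega,d\sigma)$. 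The reduction to positive definiteness of $C_m$ survives, but via congruence by $G$, not a diagonal rescaling. Second, the genuinely delicate step---which you flag but perhaps understate---is making the remainder $O(m^{n-2})\|f\|^2$ \emph{uniform} over $f$ ranging through a space whose dimension grows like $m^{n-1}$; a pointwise stationary-phase expansion is not enough, and this is exactly where Catlin--D'Angelo invoke the full Fourier integral operator description of the Szeg\H{o} projector together with compactness arguments. With those two caveats, your plan matches the cited source.
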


\section{Homogeneous polynomials and associated bihomogeneous polynomials} 
\label{sec: positivity}

Throughout this section, we fix a positive integer $n\geq 2$.  
For each homogeneous real polynomial $p =\sum_{|I| = d} c_{I} x^I \in \R[x_1,\dots,x_n] $ of degree 
$d$, we have an associated Hermitian bihomogeneous polynomial $P$ of bidegree $(d,d)$
            given by
\begin{equation} \label{eq: defAssocBihomPoly}
	P(z, \conj{w}) := p(z_1 \conj{w_1}, \dots, z_n \conj{w_n})=\sum_{|I| = d} c_{I} z^{I} \conj{w^{I}}
\end{equation} 
for all $z=(z_1,\dots,z_n), ~w=(w_1,\dots, w_n)\in\C^n$.
We remark that the bihomogeneous polynomial $P$ is indeed Hermitian, since 
 the $c_I$'s are real.  First we make a simple observation as follows:
 
\begin{proposition}\label{prop: AssocBihomIsSOS}
$p$ has all positive coefficients
    if and only if
      $P$
            is a maximal squared norm.
\end{proposition}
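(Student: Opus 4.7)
The plan is to exploit the equivalent characterization of ``maximal squared norm'' recalled in Section \ref{sec: CDThm}: a Hermitian bihomogeneous polynomial of bidegree $(d,d)$ is a maximal squared norm if and only if the Hermitian matrix representing it with respect to some (equivalently, any) basis of $\C[z_1,\dots,z_n]_d$ is positive definite. The whole argument then reduces to choosing a basis adapted to the specific shape of $P$ given by \eqref{eq: defAssocBihomPoly}, namely the monomial basis $\{z^I : |I| = d\}$, for which $P$ becomes automatically diagonal.

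More precisely, reading off \eqref{eq: defAssocBihomPoly}, the expansion
\[
P(z,\conj{w}) = \sum_{|I|=d} c_I \, z^I \conj{w^I}
\]
shows that the coefficient matrix $C = (c_{I\conj J})_{|I|=|J|=d}$ of $P$ in the monomial basis (compare \eqref{eq: matrixC}) is \emph{diagonal}, with diagonal entries equal to the coefficients $c_I$ of $p$. By the criterion above, $P$ is a maximal squared norm iff this diagonal matrix is positive definite, iff every $c_I$ is strictly positive --- which is precisely the statement that $p$ has all positive coefficients. This argument handles both implications simultaneously.

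For the implication ``$p$ has all positive coefficients $\Rightarrow P$ is a maximal squared norm'' one may alternatively exhibit the decomposition \eqref{eq: sosP} explicitly: set $g_I(z) := \sqrt{c_I}\, z^I$ for each multi-index $I$ with $|I| = d$, observe that $\{g_I\}$ remains a basis of $\C[z_1,\dots,z_n]_d$ (since each $g_I$ is a nonzero scalar multiple of a distinct monomial and the count $\binom{d+n-1}{n-1}$ matches), and verify directly from \eqref{eq: defAssocBihomPoly} that $P(z,\conj w) = \sum_{|I|=d} g_I(z)\,\conj{g_I(w)}$. This makes the forward direction completely transparent.

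There is no real obstacle here; the only point requiring care is to emphasize that the diagonal form of the matrix $C$ is a feature special to the polynomials $P$ arising as in \eqref{eq: defAssocBihomPoly} from a real polynomial $p$ in the ``polarized'' variables $x_i \mapsto z_i\conj{w_i}$, so the equivalence of Proposition \ref{prop: AssocBihomIsSOS} reduces to the elementary fact that a diagonal Hermitian matrix is positive definite iff all its diagonal entries are positive.
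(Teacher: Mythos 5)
Your proof is correct and follows essentially the same route as the paper: express $P$ in the monomial basis $\{z^I\}_{|I|=d}$, note that its associated Hermitian matrix is the diagonal matrix $\mathrm{diag}(c_I)$, and invoke the criterion that a maximal squared norm is equivalent to positive definiteness of this matrix. The explicit decomposition $g_I=\sqrt{c_I}\,z^I$ you offer for the forward direction is a harmless addition but not needed beyond the diagonal-matrix argument.
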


\begin{proof}  With notation as in Section \ref{sec: CDThm},
 the monomials $\{z^I\}_{|I| = d}$ form a basis of the 
    complex vector space $\C[z_1,\dots, z_n]_{d}$.  With respect to this basis, it follows readily from 
    \eqref{eq: defAssocBihomPoly} that the square matrix associated to $P$ (as in \eqref{eq: matrixC}) is given by 
    the real diagonal matrix $C:=\mathop{\mathrm{diag}}(c_I)_{|I|=d}$.  Then, as remarked in 
    Section \ref{sec: CDThm},
  $P$ is a maximal squared norm  
    if and only if the matrix $C$ is positive definite.  
    In turn, the latter condition holds if and only if $c_I>0$ for all $|I|=d$.
\end{proof}

 Our main result in this section is the following proposition:
\begin{proposition} \label{prop: P3properties}
Let $p\in\R[x_1,\dots,x_n]$ be a nonconstant homogeneous real polynomial, and let $P$ be its associated Hermitian bihomogeneous polynomial 
as in \eqref{eq: defAssocBihomPoly}.  
If $p$
    satisfies (Pos1), (Pos2), and (Pos3), then the following statements hold:
    \begin{enumerate}[(i)]
\item \label{prop: Pposdef}      $P$ is positive on $\C^n\setminus\{0\}$.
\item \label{prop: spsc}  The domain $\Omega_{P < 1}:=\{z\in\C^n\,\big|\, P(z,\conj{z})<1\}$ is strongly pseudoconvex.
\item \label{prop: PSGCS} $P$ satisfies the SGCS inequality.
\end{enumerate}
\end{proposition}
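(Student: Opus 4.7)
The plan is to prove (i), (iii), and then deduce (ii) from (iii) by a standard limiting argument.

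For (i), since $P(z, \bar z) = p(|z_1|^2, \ldots, |z_n|^2)$ and the map $z \mapsto (|z_i|^2)_i$ takes $\C^n \setminus \{0\}$ onto $\PosOrthant \setminus \{0\}$, it suffices to show $p > 0$ on $\PosOrthant \setminus \{0\}$. I would proceed by cases on $x$: if $x$ has exactly one nonzero coordinate $x_k$, (Pos1) gives $p(x) = x_k^d\, p(e_k) > 0$; if $x$ has at least two nonzero coordinates $x_j, x_k$, form $z \in \C^n$ by flipping the sign of $x_k$. A short check shows $z \notin U(1) \cdot \PosOrthant$ (any representation $z = e^{i\theta} y$ with $y \in \PosOrthant$ would force $e^{i\theta} = 1$ by the positive entry $x_j$, contradicting $-x_k < 0$), and (Pos3) yields $p(x) = p(|z|) > |p(z)| \geq 0$.

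For (iii), the SGCS inequality, I would chain
\begin{align*}
|P(z, \bar w)|^2 = |p(z_1 \bar w_1, \ldots, z_n \bar w_n)|^2 &\leq p(|z_1||w_1|, \ldots, |z_n||w_n|)^2 \\
&\leq p(|z_1|^2, \ldots, |z_n|^2)\, p(|w_1|^2, \ldots, |w_n|^2),
\end{align*}
where the right side is $P(z, \bar z) P(w, \bar w)$. The first inequality is (Pos3) in its non-strict form, with equality iff $(z_1 \bar w_1, \ldots, z_n \bar w_n) \in U(1) \cdot \PosOrthant$; the second is a Cauchy--Schwarz-type inequality for $p$ on $\PosOrthant$, equivalent to the convexity of $(t_1, \ldots, t_n) \mapsto \log p(e^{t_1}, \ldots, e^{t_n})$. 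A brief verification shows that simultaneous equality in both inequalities forces $z = \lambda w$ for some $\lambda \in \C$, so linear independence of $z, w$ yields strict SGCS.

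The main obstacle is the Cauchy--Schwarz for $p$ on $\PosOrthant$, which is trivial when all coefficients of $p$ are nonnegative but requires care otherwise. I would approach it via the polarization identity
\[
P(z, \bar z) P(w, \bar w) - |P(z, \bar w)|^2 = \tfrac{1}{2} \sum_{|I|=|J|=d} c_I c_J \, |z^I w^J - z^J w^I|^2,
\]
obtained by symmetrizing the monomial expansion, combined with (Pos1--2) to control the cross terms $c_I c_J$ of opposite signs; alternatively, the log-convexity of $p$ on $\PosOrthant$ may be established directly by exploiting (Pos2) to extend the boundary positivity of $\partial_k p$ into the interior of $\PosOrthant$.

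Finally, for (ii), strong pseudoconvexity of $\Omega_{P<1}$ follows from (iii) by a limiting argument: for $z_0 \in \partial \Omega_{P<1}$ and $v$ in the complex tangent space at $z_0$, expand $\Phi(\epsilon) := P(z_0, \bar z_0) P(z_0 + \epsilon v, \overline{z_0 + \epsilon v}) - |P(z_0, \overline{z_0 + \epsilon v})|^2$ in $\epsilon, \bar\epsilon$. The zeroth- and first-order terms vanish, and the $|\epsilon|^2$-coefficient is $P(z_0, \bar z_0)\, L(v, \bar v) - \big|\sum_i P_{z_i}(z_0, \bar z_0) v_i\big|^2$, where $L$ is the Levi form of $P$; SGCS makes this nonnegative, and a refinement using (Pos2) to rule out the degenerate case promotes it to strict positivity of $L$ on the complex tangent space.
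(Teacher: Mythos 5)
Your part (i) coincides with the paper's argument (Proposition \ref{prop: Pos1Pos3impliesPpd}), but both (ii) and (iii) have a genuine gap, and it is the same missing ingredient in each: the Cauchy--Schwarz inequality $p(\sqrt{x_1y_1},\ldots,\sqrt{x_ny_n})^2\le p(x)\,p(y)$ on $\R_+^n$, equivalently the convexity of $t\mapsto\log p(e^{t_1},\ldots,e^{t_{n-1}},1)$. Neither of your proposed routes establishes it. The Lagrange-type polarization identity you write is correct, but it proves the inequality only when all products $c_Ic_J\ge 0$; for the polynomials that make the theorem interesting (mixed-sign coefficients, e.g.\ \eqref{eq: plambda}) it contains genuinely negative cross terms, and ``controlling'' them by (Pos1)--(Pos2) is not an argument --- no pointwise control of those terms is available. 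Your alternative, that (Pos2) ``extends boundary positivity of $\partial_k p$ into the interior,'' cannot yield log-convexity either: positivity of first derivatives says nothing about the Hessian of $\log\tilde f$, and (Pos3) is in fact the essential hypothesis here. The paper obtains the convexity from a nontrivial analytic result of De Angelis (Lemma \ref{lem: positiveHessianInInterior}, i.e.\ \cite[Theorem 6.11]{Deangelis94}): if a polynomial positive on the open orthant satisfies the modulus inequality $|f(z)|\le f(|z_1|,\ldots,|z_\ell|)$ near the open orthant and has full-dimensional Newton polytope, then its logarithmic Hessian $J_f$ is positive definite there; the hypotheses are verified using (Pos3) (via \eqref{eq:  pinequalitycn}) and (Pos1) (via Lemma \ref{lem: Zspan}). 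Without this input, the middle link of your chain in (iii) is unproved.

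For (ii), the deduction from SGCS is insufficient as stated. Your expansion of $\Phi(\epsilon)$ is fine and the first-order and pure $\epsilon^2$, $\bar\epsilon^2$ terms do cancel, but from $\Phi\ge 0$ one can only conclude that the $|\epsilon|^2$-coefficient is nonnegative, i.e.\ that the Levi form is positive \emph{semi}definite on the complex tangent space --- Levi pseudoconvexity, not strong pseudoconvexity. Strictness of $\Phi(\epsilon)>0$ for $\epsilon\neq 0$ does not transfer to the second-order coefficient (it could be carried by higher-order terms), and the promised ``refinement using (Pos2)'' is exactly the missing step; note that Theorem \ref{thm: CDThm} lists strong pseudoconvexity and SGCS as separate hypotheses, so one should not expect the former to follow formally from the latter. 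Moreover (Pos2) cannot supply the strictness in the directions corresponding to nonvanishing coordinates of the boundary point, since the facet condition has no bearing there. The paper instead proves strict positivity of the complex Hessian of $\log P$ directly at each boundary point, in the adapted coordinates \eqref{eq: coordu}: the Hessian splits as $A_1+A_2$ in \eqref{eq: twoHessian}, where $A_2$ (the directions with vanishing coordinates) is handled by (Pos2), while $A_1$ is strictly positive by the same De Angelis positive-definiteness lemma --- so again (Pos1) and (Pos3), not (Pos2), do the decisive work. To repair your plan you would have to import that lemma (or prove an equivalent strict log-convexity statement), at which point you are essentially following the paper's proof of Propositions \ref{prop: StronglyPseudoconvex} and \ref{prop: pPSGCS}.
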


Througout the rest of this section, which is devoted to the proof of the above proposition, we let 
$p\in\R[x_1,\dots,x_n]$ be a nonconstant homogeneous real polynomial, and let $P$ be its 
associated Hermitian bihomogeneous polynomial.  

\begin{proposition}\label{prop: Pos1Pos3impliesPpd} 
If $p$ satisfies (Pos1) and (Pos3), 
    then 
\begin{enumerate}[(i)]
\item \label{prop: ppositive} $p(x)>0 $ for all $x\in\R_+^n \setminus \{0\}$, and 
\item \label{prop: Ppdf} $P$ is positive on $\C^n\setminus\{0\}$.
\end{enumerate}
\end{proposition}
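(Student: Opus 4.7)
The plan is to prove (i) first and then to deduce (ii) at once. Indeed, from \eqref{eq: defAssocBihomPoly} one has $P(z,\conj{z}) = p(|z_1|^2, \ldots, |z_n|^2)$ for every $z \in \C^n$; so if $z \neq 0$ then $(|z_1|^2, \ldots, |z_n|^2) \in \R_+^n \setminus \{0\}$, and applying (i) would yield $P(z,\conj{z}) > 0$.

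To prove (i), I would fix $x = (x_1, \ldots, x_n) \in \R_+^n \setminus \{0\}$ and split into two cases according to the number of strictly positive entries of $x$. If $x$ has exactly one positive entry, say $x = t\, e_k$ for some $t > 0$ and some index $k$ (where $e_k$ denotes the $k$-th standard basis vector), then homogeneity gives $p(x) = t^{d} p(e_k)$, which is strictly positive by (Pos1), using that $d = \deg p \geq 1$. If instead $x$ has at least two positive entries, say $x_j, x_k > 0$ with $j \neq k$, my plan is to invoke (Pos3) at a carefully chosen $z \in \C^n \setminus (U(1) \cdot \R_+^n)$ whose entrywise modulus is $x$. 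A concrete choice is to flip a single sign: set $z_k := -x_k$ and $z_l := x_l$ for $l \neq k$. Then $|z_l| = x_l$ for all $l$, and $z \notin U(1) \cdot \R_+^n$ because any $\theta$ satisfying $e^{i\theta} z_j = e^{i\theta} x_j \geq 0$ forces $\theta \in 2\pi\Z$, after which $e^{i\theta} z_k = -x_k < 0$, a contradiction. Applying (Pos3) would then give $p(x) = p(|z_1|, \ldots, |z_n|) > |p(z)| \geq 0$, which is exactly what is needed.

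I do not anticipate any serious obstacle here: the proof is essentially a direct unpacking of (Pos1), (Pos3), and the bookkeeping identity $P(z,\conj{z}) = p(|z_1|^2, \ldots, |z_n|^2)$. The only step that requires even mild care is the construction of the witness $z$ in the second case; having two positive entries allows one to assign different signs that cannot be simultaneously rotated into $\R_+^n$, which is precisely what places $z$ in the domain where (Pos3) applies.
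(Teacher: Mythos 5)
Your proposal is correct and follows essentially the same route as the paper: handle the single-positive-entry case via (Pos1) and homogeneity, flip one sign to produce a point outside $U(1)\cdot\R_+^n$ and apply (Pos3) in the two-positive-entry case, and deduce (ii) from the identity $P(z,\conj{z})=p(|z_1|^2,\ldots,|z_n|^2)$. No gaps.
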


\begin{proof}
Let $x=(x_1,\dots,x_n) \in \R_+^n \setminus \{0\}$ be given.  If $x_i>0$ for only one $i$ (with $1\leq i\leq n$), then it follows 
readily from (Pos1) and the homogeneity of $p$ that $p(x)>0$.  
If $x_i , x_j > 0$ for some $1 \le i < j \le n$, then by permuting the coordinate functions,
we may assume without loss of generality that $x_1, x_2>0$.  Then 
one easily checks that $x^\prime:=(-x_1,x_2,x_3,\dots,x_n)\in \C^n \setminus (U(1) \cdot \PosOrthant)$ 
(since the equalities $e^{i\theta}\cdot x_1=-x_1$ and $e^{i\theta}\cdot x_2=x_2$ imply 
$e^{i\theta}=-1$ and $e^{i\theta}=1$ respectively, which is a contradiction).  Then by (Pos3), one has 
$ |p(x^\prime)|<p(x)$, which implies that $p(x)>0$ again. 
This finishes the proof of \eqref{prop: ppositive}. 
For \eqref{prop: Ppdf}, we let $  z=(z_1,\dots,z_n)\in \C^n\setminus\{0\}$ be given.  Then one sees
from \eqref{eq: defAssocBihomPoly} and \eqref{prop: ppositive} that
$       
    P(z,\conj{z})  =    p(x)>0$, where  $x=(|z_1|^2,\dots,|z_n|^2)\in\R_+^n\setminus\{0\}$.
Hence $P$ is positive on $\C^n\setminus\{0\}$.
\end{proof}

Next we recall a result of De Angelis \cite{Deangelis94}.  For $\ell \geq 1$, we denote the interior of 
$\R^\ell_+$ by $(\R^\ell_+)^\circ:=\{(s_1,\dots, s_\ell)\in \R^\ell\,\big|\, s_i>0,~i=1,\dots,\ell\}$.  
Let $f(s) = \sum_{I} c_{I} s^{I} \in \R[s_1, \ldots, s_\ell]$ be a (possibly non-homogeneous) polynomial 
	such that $f(s) > 0$ for all $s =(s_1,\dots, z_\ell)\in (\R^\ell_+)^\circ$.  Consider the set
$\Log(f):=\{I\in\Z^\ell\,\big|\,\, c_{I} \neq 0 \}$, 
and recall that
	the {\emph{Newton polytope}} $N(f)$ of $f$ is defined as the convex hull
	of 
	$\text{Log}(f)$ in $\R^\ell$.
We associate to $f$ the $\ell \times \ell$ matrix-valued function $J_f
: (\R^\ell_+)^\circ\to \R^{\ell^2}$ whose components are given by
\begin{align} \label{eq: defineJ}
J_f(s)_{ij} :&= s_j \cdot \frac{\partial}{\partial s_j}\Big(s_i\cdot\frac{\partial }{\partial s_i}\big(\log f\big)\Big)(s)
\\
&=s_is_j\frac{\partial^2 } {\partial s_i\partial s_j  }\big(\log f\big)(s)+\delta_{ij}\cdot s_j\cdot
\frac{\partial}{\partial s_i}\big(\log f\big)(s)
\notag
\end{align}
for $s\in (\R^{\ell}_+)^\circ,~1\leq i,j\leq \ell$.  Here $\delta_{ij}$ denotes the Kronecker delta. i.e., $\delta_{ij}=1 $ (resp.
$0$) if $i=j$ (resp. $i\neq j$).  Next we introduce a change of variables, and consider 
the function $\widetilde f:\R^{\ell}\to \R$ associated to $f$ given by 
\begin{equation}\label{eq: tildef}
\widetilde f(t)=f(e^{t_1}, \ldots, e^{t_\ell}) \quad \text{for } t = (t_1,\ldots, t_\ell) \in \R^{\ell}.
\end{equation}
Using \eqref{eq: defineJ}, one easily checks that the Hessian matrix of 
$\log\widetilde f$ coincides with $J_f$, i.e., one has
\begin{equation}\label{eq: HessJlogf}
\frac{\partial^2 } {\partial t_i\partial t_j  }\big(\log \widetilde f\big)(t) =J_f(e^{t_1}, \ldots, e^{t_\ell})_{ij}
\end{equation}
for all $t=(t_1,\dots, t_\ell)\in\R^{\ell},~1\leq i,j\leq \ell$.   We recall the following result:
	
\begin{lemma}[De Angelis {\cite[Theorem 6.11]{Deangelis94}}] \label{lem: positiveHessianInInterior}
Let $f(s) \in \R[s_1, \ldots, s_\ell]$ be a polynomial such that $f(s) > 0$ for all $s \in (\R^{\ell}_+)^\circ$.
Suppose that there exists an open neighborhood $V$ of $(\R^{\ell}_+)^\circ$ in $(\C \setminus \{0\})^\ell$
		such that $|f(z)| \le f(|z_1|, \ldots, |z_\ell|)$ for all $z = (z_1, \ldots, z_\ell) \in V$, and the 
		Newton polytope $N(f)$ has affine dimension $\ell$.
Then the $\ell \times \ell$ matrix $J_f(s)$ is positive definite for all $s \in (\R^{\ell}_+)^\circ$.
\end{lemma}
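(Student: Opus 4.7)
The plan is to reduce positive definiteness of $J_f(s)$ to strict convexity of $\log \tilde f$ on $\R^\ell$, establish convexity via a holomorphic extension, and promote it to strict convexity using the Newton polytope hypothesis. By \eqref{eq: HessJlogf}, at $s = (e^{t_1}, \ldots, e^{t_\ell})$ the matrix $J_f(s)$ coincides with the real Hessian $\Hess(\log \tilde f)(t)$, so positive definiteness of $J_f$ on $(\R_+^\ell)^\circ$ is equivalent to strict convexity of $\log \tilde f$ on $\R^\ell$.

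For the semidefinite bound, I would fix $t_0 \in \R^\ell$ and use $\tilde f(t_0) > 0$ to construct a holomorphic branch $g(w) := \log f(e^w)$ on a polydisc $D$ about $t_0$ in $\C^\ell$, normalized so that $g(t_0) = \log \tilde f(t_0) \in \R$. Shrinking $D$ so that $e^w \in V$ for all $w \in D$, the hypothesis $|f(z)| \le f(|z_1|, \ldots, |z_\ell|)$ translates, with $w = t + iu$ and $t, u \in \R^\ell$, into $\re g(t + iu) \le \re g(t)$. Hence $u = 0$ is a local maximum of $u \mapsto \re g(t + iu)$ for each $t$, giving $\Hess_u \re g(t, 0) \le 0$ as a real symmetric matrix. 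Since $g$ is holomorphic, $\re g$ is pluriharmonic, and the relations $\partial_{t_j} g = \partial_{w_j} g$, $\partial_{u_j} g = i \partial_{w_j} g$ yield the pointwise identity $\Hess_t \re g + \Hess_u \re g = 0$. Combining these produces $\Hess(\log \tilde f)(t) = \Hess_t \re g(t, 0) \ge 0$ on $\R^\ell$, i.e.\ $\log \tilde f$ is convex.

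The main obstacle is upgrading this to strict convexity, and this is where the Newton polytope assumption enters. My approach would be by contradiction: assume $v^\top \Hess(\log \tilde f)(t_0) v = 0$ for some $t_0$ and some $v \in \R^\ell \setminus \{0\}$, restrict to the complex line through $t_0$ in direction $v$, set $F(\zeta) := f(e^{t_0 + \zeta v}) = \sum_\gamma a_\gamma e^{\gamma \zeta}$ (grouping over the distinct values $\gamma = \langle I, v\rangle$ for $I \in \Log(f)$, with $a_\gamma = \sum_{\langle I, v\rangle = \gamma} c_I e^{\langle I, t_0\rangle}$), and put $G(\zeta) := \log F(\zeta)$. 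The preceding steps give a harmonic $\re G$ on a disc with $\re G(x + iy) \le \re G(x)$ for small $(x, y)$ and $(\re G)''(0) = 0$. A careful iterative analysis, propagating these constraints through the Taylor expansion of $\re G$ at $0$, should force $G$ to be affine in $\zeta$; then $F(\zeta) = F(0) e^{\alpha \zeta}$ identically, and combining linear independence of distinct exponentials $\{e^{\gamma \zeta}\}$ with propagation in $t_0$ forces $\Log(f) \subset \{I : \langle I, v\rangle = \alpha\}$, contradicting that $N(f)$ has affine dimension $\ell$. The most delicate step is closing this propagation: promoting the single second-order degeneracy at $\zeta = 0$ to full affinity of $G$ requires either a bootstrap argument on the Taylor coefficients of $\re G$ (using harmonicity and the two-dimensional local-maximum inequality order by order), or a prior propagation of degeneracy along the whole line $t_0 + \R v$ from real-analyticity of the non-negative quadratic form $t \mapsto v^\top \Hess(\log \tilde f)(t) v$ before extracting the exponential-sum rigidity.
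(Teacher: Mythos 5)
The paper does not actually prove this lemma: it is imported verbatim from De Angelis \cite{Deangelis94} (Theorem 6.11), so your attempt can only be judged on its own merits. Your reduction to strict convexity of $\log\tilde f$ via \eqref{eq: HessJlogf} and your proof of the semidefinite part are correct: the local maximum of $u\mapsto\re g(t+iu)$ at $u=0$ together with the pluriharmonic identity $\Hess_t\re g=-\Hess_u\re g$ does give $\Hess(\log\tilde f)\ge 0$. In the strictness step, the first assertion you leave open --- that the one-variable data force $G$ affine --- is in fact true and can be closed without a delicate bootstrap: writing $\phi(x)=G(x)$ on the reals, convexity plus $\phi''(0)=0$ forces $\phi''$ to vanish at $0$ to some exact even order $2m\ge 2$ (unless $\phi''\equiv 0$), so the lowest non-affine Taylor term of $G$ is $a_{2m+2}\zeta^{2m+2}$ with $a_{2m+2}>0$; the leading homogeneous part of $\re G(x)-\re G(x+iy)$ is then $a_{2m+2}\bigl(x^{k}-\re(x+iy)^{k}\bigr)$ with $k=2m+2\ge 4$, and in polar form $\cos^{k}\theta\ge\cos(k\theta)$ fails at $\theta=2\pi/k$ for every $k\ge 3$, contradicting the inequality $\re G(x+iy)\le\re G(x)$ near $0$.

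The genuine gap is your final step. From $F(\zeta)=F(0)e^{\alpha\zeta}$, linear independence of the exponentials $e^{\gamma\zeta}$ only yields the vanishing of the \emph{grouped} coefficients $a_\gamma(t_0)=\sum_{\langle I,v\rangle=\gamma}c_Ie^{\langle I,t_0\rangle}$ for $\gamma\ne\alpha$, at the single point $t_0$. Since the $c_I$ may have mixed signs --- which is exactly the regime the lemma is designed for; with all $c_I>0$ the whole statement is an easy Cauchy--Schwarz computation --- this does not place $\Log(f)$ in the hyperplane $\{\langle I,v\rangle=\alpha\}$, because of possible cancellation inside a group. Your phrase ``propagation in $t_0$'' names the missing ingredient but supplies no mechanism: the degeneracy is known only at $t_0$, and rerunning the argument along the line $t_0+\R v$ reproduces the same grouped relations rather than new ones, while your alternative suggestion fails because a nonnegative real-analytic function such as $t\mapsto v^{\top}\Hess(\log\tilde f)(t)v$ can vanish at a point without vanishing on any line through it. One can squeeze out a little more (since $\Hess\ge 0$ and $v^{\top}\Hess v=0$ along the line, $\Hess\cdot v=0$ there, so $\nabla\log\tilde f$ is constant on the line and also $\nabla$ of each group polynomial vanishes at $s_0$), but this still does not force the group polynomials to vanish identically. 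So the contradiction with $N(f)$ having affine dimension $\ell$ --- the actual content of De Angelis' theorem --- is not established by your sketch and needs a genuinely different (or substantially longer) argument.
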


As before, we let $p\in\R[x_1,\dots,x_n]$ be a nonconstant homogeneous polynomial. Let 
$\mathfrak{S}_n$ denote the group of
permutations of the coordinate functions on $\R^n$.  For each $1\leq \ell\leq n-1$ and each 
$\sigma\in \mathfrak{S}_n$, we 
associate to $p$ a non-homogeneous polynomial $p_{\ell,\sigma}\in \R[s_1,\dots, s_\ell]$ 
given by
\begin{equation}\label{eq: definepmsigma}        
p_{\ell,\sigma}(s_1,\dots, s_\ell):=p(\sigma(s_1,\cdots, s_\ell,0,\dots,0,1)).
\end{equation} 

\begin{lemma} \label{lem: Zspan}  
(i)  If $p$ satisfies (Pos1), then for each $1\leq \ell\leq n-1$ 
and each $\sigma\in
\mathfrak{S}_n$, the Newton polytope $N(p_{\ell,\sigma})$ has affine dimension $\ell$. 
\par\noindent
(ii)  If $p$ satisfies (Pos1) and (Pos2), then for each $1\leq \ell\leq n-1$ 
and each $\sigma\in
\mathfrak{S}_n$, the set
$
S_{p_{\ell,\sigma}} : =\{I - J \, \big| I,\,J\in \Log (p_{\ell,\sigma})\}
$
generates $\Z^{\ell}$ as a 
$\Z$-module.
\end{lemma}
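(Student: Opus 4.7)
The plan is to first reduce to the case $\sigma = \id$: both (Pos1) and (Pos2) are preserved under coordinate permutations (if $p$ satisfies them, so does $p \circ \sigma$), and one easily checks that $p_{\ell,\sigma} = (p \circ \sigma)_{\ell,\id}$. Writing $p_\ell(s_1,\dots,s_\ell) := p(s_1,\dots,s_\ell,0,\dots,0,1)$, the entire argument then reduces to locating specific elements of $\Log(p_\ell)$ arising from monomials of $p$ whose coefficients are forced to be positive by (Pos1) or (Pos2), together with a simple check that these monomials do not cancel under the specialization.

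For part (i), I would exhibit $\ell + 1$ affinely independent log-points in $\Log(p_\ell)$. Let $d = \deg p$. Condition (Pos1) says that for each $i = 1,\dots,n$, the coefficient of $x_i^d$ in $p$ is $p(0,\dots,1,\dots,0) > 0$ (with the $1$ in position $i$). Under the substitution, $x_n^d$ becomes a positive constant, while each $x_k^d$ with $k = 1,\dots,\ell$ becomes a positive multiple of $s_k^d$, and no cancellation can occur because each of these target monomials of $p_\ell$ has a unique preimage monomial in $p$. Hence $\Log(p_\ell)$ contains the origin together with the vectors $d \cdot \mathbf{e}_k$ for $k = 1,\dots,\ell$, where $\mathbf{e}_1,\dots,\mathbf{e}_\ell$ denotes the standard basis of $\Z^\ell$. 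These $\ell+1$ points are affinely independent, proving that $N(p_\ell)$ has affine dimension $\ell$.

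For part (ii), I would upgrade the set of log-points using (Pos2) to show that each $\mathbf{e}_k$ lies in $\Log(p_\ell)$. Let $v := (0,\dots,0,1) \in \R^n$; then $v \in F_k(\PosOrthant) \setminus \{0\}$ for every $k = 1,\dots,n-1$, so (Pos2) gives $\frac{\partial p}{\partial x_k}(v) > 0$. A direct computation identifies this partial derivative at $v$ with the coefficient of the single monomial $x_k x_n^{d-1}$ in $p$, since any other monomial with positive $x_k$-degree vanishes at $v$. Upon specialization, $x_k x_n^{d-1}$ becomes $s_k$ for $k = 1,\dots,\ell$, whence $\mathbf{e}_k \in \Log(p_\ell)$. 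Combined with $0 \in \Log(p_\ell)$ from part (i), this yields $\mathbf{e}_k = \mathbf{e}_k - 0 \in S_{p_\ell}$ for each $k$, and $\{\mathbf{e}_1,\dots,\mathbf{e}_\ell\}$ obviously generates $\Z^\ell$ as a $\Z$-module.

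I do not anticipate any substantive obstacle. The proof is essentially a bookkeeping exercise in which the rigidity of the substitution $(s_1,\dots,s_\ell,0,\dots,0,1)$ ensures that the positive coefficients of $p$ supplied by (Pos1) and (Pos2) transfer without cancellation to the claimed log-points of $p_\ell$; the only step requiring any attention is verifying the uniqueness of preimages under the specialization, which is immediate from the homogeneity of $p$ combined with the form of the substitution.
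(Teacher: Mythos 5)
Your proof is correct and follows essentially the same route as the paper: reduce to $\sigma=\mathrm{Id}$, use (Pos1) to place the origin and the points $d\mathbf{e}_k$ in $\Log(p_{\ell,\sigma})$ for part (i), and use (Pos2) at the point $(0,\dots,0,1)\in F_k(\R_+^n)$ to place each $\mathbf{e}_k$ in $\Log(p_{\ell,\sigma})$, hence in $S_{p_{\ell,\sigma}}$, for part (ii). The only cosmetic difference is that the paper expresses the (Pos2) step as the chain-rule identity $\frac{\partial p_{\ell,\sigma}}{\partial s_i}(0)=\frac{\partial p}{\partial x_i}(0,\dots,0,1)$ rather than via the coefficient of $x_k x_n^{d-1}$, which amounts to the same computation.
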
   

\begin{proof}  As the proofs of the lemma for all the $p_{\ell,\sigma}$'s are the same, 
 we will only prove the lemma for the case when $\sigma$ is the identity permutation, so that
$p_{\ell,\sigma}(s_1,\dots,s_\ell)=p(s_1,\cdots, s_\ell,0,\cdots,0,1)$.  Let $p$ be of degree $d\geq 1$.    
If $p$ satisfies (Pos1), then it follows readily that $\Log (p)$ contains the points
$(d,0,\dots,0)$, $\dots$, $(0,\dots,0,d)$.  Hence $
\Log (p_{\ell,\sigma})(\subset\Z^\ell)$ contains the points $(d,0,\dots,0)$, $\dots$, $(0,\dots,0,d)$
and $(0,\dots,0)$.  This implies readily that $N(p_{\ell,\sigma})$ has affine dimension $\ell$, and this finishes the proof of
(i).  We proceed to prove (ii).  
For each 
$1\leq i\leq \ell$, one 
easily checks that
\begin{equation}\label{eq: partialpmsigma}
\frac{\partial p_{\ell,\sigma}}{\partial s_i}(0,\dots,0)=\frac{\partial p}{\partial x_i}(0,\dots,0,1)>0,
\end{equation}
where the inequality holds since $p$ satisfies (Pos2) and $(0,\dots, 0,1)\in F_i(\PosOrthant)$. 
This implies that $\Log (p_{\ell,\sigma})(\subset \Z^{\ell})$ contains the points $(1,0,\dots,0)$, $\ldots$, $(0,\dots, 0,1)$, and so does $S_{p_{\ell,\sigma}}$ 
(since $\Log (p_{\ell,\sigma})$ also contains $(0,\cdots, 0)$ as shown in (i)). 
It follows that $S_{p_{\ell,\sigma}}$ generates $\Z^{\ell}$ as a 
$\Z$-module.
\end{proof} 

\begin{proposition} \label{prop: StronglyPseudoconvex}
If $p$ satisfies (Pos1), (Pos2) and (Pos3),
    then the domain $\Omega_{P < 1}$ is strongly pseudoconvex.
\end{proposition}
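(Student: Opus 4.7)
The plan is to verify strong pseudoconvexity of $\Omega_{P<1}$ by directly computing the Levi form of the defining function $\rho(z,\bar z) := P(z,\bar z)-1 = p(|z_1|^2,\dots,|z_n|^2)-1$. Writing $u_i := |z_i|^2$, one has $\rho_{z_i} = p_i(u)\bar z_i$ and $\rho_{z_i\bar z_j} = p_{ij}(u)\bar z_iz_j + \delta_{ij}p_i(u)$. Let $z^0 \in \partial\Omega_{P<1}$, so $p(u^0)=1$. Since the conclusion and each of (Pos1)--(Pos3) are invariant under permuting coordinate functions, I may assume without loss of generality that $z^0_1,\dots,z^0_m \ne 0$ and $z^0_{m+1}=\cdots=z^0_n=0$ for some $1 \le m \le n$. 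Then the Levi form at $z^0$ decomposes as $L^{(m)} + \sum_{i>m} p_i(u^0)|a_i|^2$, where $L^{(m)}$ is the analogous Levi form in $\C^m$ associated to the face polynomial $p^{(m)}(x_1,\dots,x_m) := p(x_1,\dots,x_m,0,\dots,0)$, and the tangent equation $\sum_i p_i(u^0)\bar z^0_i a_i=0$ constrains only $(a_1,\dots,a_m)$. The terms $p_i(u^0)$ for $i>m$ are strictly positive by (Pos2) applied at the facet point $u^0 \in F_i(\PosOrthant)\setminus\{0\}$, and a short direct check (using the implication $(z,0)\in U(1)\cdot\PosOrthant \Rightarrow z\in U(1)\cdot\R_+^m$) shows that $p^{(m)}$ still satisfies (Pos1)--(Pos3). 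Thus it suffices to treat the case where $z^0$ has no vanishing coordinate.

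In that case, the substitution $b_i := a_i/z^0_i$ converts the Levi form to $\sum_{i,j} u^0_iu^0_j p_{ij}(u^0)b_i\bar b_j + \sum_i u^0_i p_i(u^0)|b_i|^2$ with tangent equation $\sum_i u^0_i p_i(u^0)b_i = 0$. Applying the identity $p_{ij} = (\log p)_{ij} + p_ip_j$ valid on $\{p=1\}$, and using the tangent equation to cancel the arising $\bigl|\sum_i u^0_i p_i b_i\bigr|^2$ term, the Levi form becomes exactly $b^{\ast} J_p(u^0)\,b$. The key algebraic step is then to show that $J_p(u^0)$ is positive semidefinite with kernel precisely $\mathrm{span}\{(1,\dots,1)^{\top}\}$. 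For this I exploit the homogeneity of $p$ to write
\[
\log\widetilde{p}(t) = d\,t_n + \log\widetilde{p_{n-1,\mathrm{id}}}(t_1-t_n,\dots,t_{n-1}-t_n),
\]
and perform the invertible linear change $r_i := t_i - t_n$ ($i<n$), $r_n := t_n$. By \eqref{eq: HessJlogf}, the Hessian in $t$-coordinates equals $J_p(e^t)$, while in $r$-coordinates it is block-diagonal with $J_{p_{n-1,\mathrm{id}}}(e^{r_1},\dots,e^{r_{n-1}})$ in the upper-left block and a single $0$ in the last entry; transporting back by the Jacobian $M = \partial r/\partial t$ conjugates these, and the kernel of the block-diagonal matrix (namely $\mathrm{span}\{(0,\dots,0,1)^{\top}\}$) pulls back under $M^{-1}$ to exactly $\mathrm{span}\{(1,\dots,1)^{\top}\}$.

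To conclude that $J_{p_{n-1,\mathrm{id}}}$ is positive definite on $(\R^{n-1}_+)^\circ$, I would apply Lemma~\ref{lem: positiveHessianInInterior} with $f = p_{n-1,\mathrm{id}}$: positivity of $f$ on $(\R^{n-1}_+)^\circ$ comes from Proposition~\ref{prop: Pos1Pos3impliesPpd}(i); the majorization $|f(z)| \le f(|z_1|,\dots,|z_{n-1}|)$ on $V := (\C\setminus\{0\})^{n-1}$ follows from (Pos3) together with the observation that $(z_1,\dots,z_{n-1},1) \in U(1)\cdot\PosOrthant$ forces $(z_1,\dots,z_{n-1}) \in \R_+^{n-1}$; and the affine-dimension condition on $N(f)$ is Lemma~\ref{lem: Zspan}(i). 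Finally, Euler's identity yields $\sum_i u^0_i p_i(u^0) = d \cdot p(u^0) = d \ne 0$, so no nonzero vector $b$ proportional to $(1,\dots,1)$ can satisfy the tangent equation; hence $b^{\ast} J_p(u^0)\,b > 0$ on every nonzero complex tangent vector, establishing strong pseudoconvexity. I expect the main technical hurdle to be the algebraic identification of $J_p$ as a conjugate of $\mathrm{diag}(J_{p_{n-1,\mathrm{id}}},0)$ together with the precise pinning-down of its kernel; once that is in hand, the Euler relation makes the final tangent-space positivity immediate.
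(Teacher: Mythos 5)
Your argument is correct, and it rests on the same two pillars as the paper's proof --- De Angelis' Hessian criterion (Lemma~\ref{lem: positiveHessianInInterior}, with its hypotheses supplied by Proposition~\ref{prop: Pos1Pos3impliesPpd}, the majorization \eqref{eq: pinequalitycn} and Lemma~\ref{lem: Zspan}(i)) for the ``interior'' directions, and (Pos2) for the directions in which the boundary point has vanishing coordinates --- but the packaging is genuinely different. The paper never works with the Euclidean defining function $P-1$: it passes to the holomorphic chart $z=\phi(u)=(u_1u_n,\dots,u_{n-1}u_n,u_n)$ of \eqref{eq: coordu}, under which the degree-$d$ homogeneous factor becomes pluriharmonic, so the Levi form of $\log(P\circ\phi)$ is directly the quadratic form of $J_{p_{\ell,\mathrm{Id}}}$ (an $(n-1)$-variable, genuinely positive definite matrix) plus first-derivative terms controlled by (Pos2), and the tangency condition is only needed to see $(v_1,\dots,v_{n-1})\neq 0$. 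You instead stay in Euclidean coordinates, reduce vanishing coordinates away via face polynomials (your check that (Pos1)--(Pos3) pass to faces is right and replaces the paper's use of the $p_{\ell,\sigma}$'s), identify the Levi form on the complex tangent space with $b^*J_p(u^0)b$ for the full $n$-variable matrix $J_p$ of \eqref{eq: defineJ} --- which is only positive \emph{semi}definite because of homogeneity --- and then must do the extra kernel analysis (your block-diagonalization via $r_i=t_i-t_n$ and \eqref{eq: HessJlogf}) plus Euler's identity to exclude the direction $(1,\dots,1)$ from the complex tangent space. What your route buys is a more transparent ``Hessian modulo the Euler direction'' picture; what the paper's chart buys is that the rank deficiency never appears, so no kernel argument is needed. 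Two small points to tidy up: you should record, as the paper does, that $\Omega_{P<1}$ is bounded with smooth boundary (positivity of $P$ on $\C^n\setminus\{0\}$ from Proposition~\ref{prop: Pos1Pos3impliesPpd}, and $\sum_i u^0_i p_i(u^0)=d\neq 0$ giving $d\rho\neq 0$ on $\{P=1\}$, which you essentially have via Euler); and in the face case $m=1$ (and more generally when the interior block is empty) Lemmas~\ref{lem: Zspan} and \ref{lem: positiveHessianInInterior} are being invoked outside their stated range $\ell\geq 1$, but there the complex tangent space is forced to be trivial by the tangency equation and Euler's identity, so the required positivity is vacuous --- worth a sentence, not a fix.
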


\begin{proof} Let $p$ be of degree $d\geq 1$.
From Proposition \ref{prop: Pos1Pos3impliesPpd}, one knows that $P(z,\conj{z})>0$ for all 
$z\in \C^n\setminus\{0\}$.  Together with the bihomogeneity of $P$ of bidegree $(d,d)$ with $d\geq 1$, it follows readily that $\Omega_{P < 1}$ is a bounded domain in 
$\C^n$ with smooth boundary.  Note that we may write 
$\Omega_{P < 1}=\{z\in\C^n\,\big|\, \log P(z,\conj{z})<0\}$.
To prove the proposition, it suffices to show
that 
\begin{align}\label{eq: pconvex}
(\sqrt{-1}\partial\conj{\partial}\log P)(v,\conj{v})&>0\quad\text{for any }z^*\in
\partial \Omega_{P < 1} \text{ and }\\
\text{any }&0\neq v\in T_{z^*}(\C^n)\text{ satisfying }\partial(\log P)(v)=0.
\notag
\end{align}
(Here $P$ denotes $P(z,\conj{z})$.) Regarding $\C^n$ as a complex manifold, it is well-known that one only needs to verify \eqref{eq: pconvex}
in terms of some local (possibly non-Euclidean) holomorphic coordinate system at each $z^*\in
\partial \Omega_{P < 1}$
(see e.g. \cite[p. 66]{FG2002}).  Take an arbitrary point $z^*=(z_1^*,\dots, z_n^*)\in \partial \Omega_{P < 1}$,
so that $P(z^*,\conj{z^*})=1$ (and thus $z^*\neq 0$).  
By
permuting the coordinate functions, we will assume without loss of generality that 
$z_n^*\neq 0$.
Next we introduce a new local coordinate system $u$ near $z^*$ 
via the holomorphic map $\phi:\{u=(u_1,\dots,u_n)\in\C^n\,\big|\, u_n\neq 0\}\to\C^n$ given by
\begin{equation}\label{eq: coordu}
z=\phi(u):=(u_1u_n,\dots, u_{n-1}u_n,u_n).
\end{equation}
Let $u^*=(u_1^*,\dots,u_n^*)$ be the point such that $z^*=\phi(u^*)$, so that $u_n^*\neq 0$.  
Then one easily sees
from \eqref{eq: defAssocBihomPoly}, \eqref{eq: coordu} and the homogeneity of $p$ that 
$
(P\circ\phi)(u,\conj{u})=p(|u_1|^2,\dots,|u_{n-1}|^2,1)\cdot |u_n|^{2d}$, so that
 \begin{equation}\label{eq: logPphi}
 \log (P\circ\phi)(u,\conj{u})=\log p(|u_1|^2,\dots,|u_{n-1}|^2,1)+d\cdot\log u_n+d\cdot\log\conj{u_n}
\end{equation}
near $u^*$ (for an appropriate logarithmic branch).  Hence one has
 \begin{align}\label{eq: d2logPphi}&\qquad
\displaystyle\frac{\partial^2( \log (P\circ\phi))} {\partial u_i\partial\conj{u_j}}(u,\conj{u})\\
&=\begin{cases}\displaystyle\Big(
u_j\conj{u_i}\cdot\frac{\partial^2( \log p)} {\partial x_i\partial x_j}
+\delta_{ij}\cdot\frac{\partial ( \log p)}{\partial x_i}\Big)
(|u_1|^2,\cdots,|u_{n-1}|^2,1)\quad\text{if }1\leq i,j\leq n-1,\\
0\quad\text{if }i=n\text{ or }j=n.
\end{cases}\notag
\end{align}
Now we take a tangent vector $0\neq v=
v_1\dfrac{\partial}{\partial u_1}+\cdots +v_n\dfrac{\partial}{\partial u_n}\in T_{u^*}(\C^n)$ satisfying 
$\partial\log(P\circ\phi)(v)=0$, or equivalently, 
\begin{equation}\label{eq: partialpphi}
\sum_{i=1}^{n-1}v_i\cdot \conj{u_i^*}\cdot\dfrac{\partial(\log p)  }{\partial x_i}(|u_1^*|^2,\cdots,|u_{n-1}^*|^2,1)
+v_n\cdot \dfrac{d}{u_n^*}=0
\end{equation}
(cf. \eqref{eq: logPphi}).   Together with the condition that  
$v\neq 0$, it follows readily that 
\begin{equation}\label{eq: voverunot0}
(v_1,\dots,v_{n-1})\neq (0,\dots,0).
\end{equation}
Let $\ell$ be the number of non-zero $u_i^*$'s for $1\leq i\leq n-1$ (so that $0\leq \ell\leq n-1$).
By permuting the first $n-1$ coordinate functions, we will assume without loss of generality that
$u_i^*\neq 0$ for each $1\leq i\leq \ell$ and $u_{\ell+1}^*=\cdots=u_{n-1}^*=0$. 
By using \eqref{eq: d2logPphi} and \eqref{eq: defineJ} (with $f=p_{\ell,\text{Id}}$ where 
$\text{Id}$ denotes the identity
permutation, and 
$s=(|u_1^*|^2,\dots,|u_{\ell}^*|^2,0,\dots,0,1)$), one easily checks that 
\begin{align}\label{eq: twoHessian}
\sum_{1\leq i,j\leq n}&\conj{v_i}\cdot
\frac{\partial^2( \log (P\circ\phi))} {\partial u_i\partial\conj{u_j}}(u^*,\conj{u^*})\cdot v_j =A_1+A_2,\quad\text{where}\\
\notag
A_1:&=\sum_{1\leq i,j\leq \ell} \dfrac{\conj{ v_i}}{ {u_i^*} }
 \cdot J_{p_{\ell,\text{Id}}}(|u_1^*|^2,\dots,|u_{\ell}^*|^2)_{ij}\cdot \dfrac{ v_j }{ \conj{u_j^*} }
\quad\text{and}
  \\
  \notag
A_2:= & \sum_{\ell+1\leq i\leq n-1}|v_i|^2\cdot\dfrac{\partial (\log p)}{\partial x_i}
 (|u_1^*|^2,\dots,|u_{\ell}^*|^2,0,\cdots,0,1).
\end{align}
Here $A_1$ (resp. $A_2$) is taken to be zero if $\ell=0$ (resp. $\ell=n-1$).
Note that $ (|u_1^*|^2,\dots,|u_{\ell}^*|^2,0,\cdots,0,1)\in F_i(\R_+^n)$ for each $\ell+1\leq i\leq n-1$.  
Hence from (Pos2), 
we see that $A_2>0$ whenever $\ell<n-1$ and  $(v_{\ell+1},\dots, v_{n-1})\neq (0,\dots,0)$.  
From  (Pos3) (for the set $\C^n \setminus (U(1) \cdot \PosOrthant)$) and the 
homogeneity of $p$ (for the set $U(1) \cdot \PosOrthant$), one easily sees that 
\begin{equation}\label{eq:  pinequalitycn}
|p(z) |\leq p(|z_1|,\dots,|z_n|)\quad\text{for all }z=(z_1,\dots,z_n) \in 
\C^n.
\end{equation}
Together with Lemma \ref{lem: Zspan}, it follows that one can apply Lemma \ref{lem: positiveHessianInInterior}
to conclude that $A_1>0$ whenever $\ell>0$ and $(v_1,\dots, v_{\ell})\neq (0,\dots, 0)$.  Since $n\geq 2$, 
by using \eqref{eq: voverunot0}, one easily concludes that $A_1+A_2>0$ 
in each of the three cases when $\ell=0$, $1\leq \ell<n-1$ or $\ell=n-1$.  This finishes the proof of 
\eqref{eq: pconvex}.
\end{proof}

\begin{proposition} \label{prop: pPSGCS}
If $p$ satisfies (Pos1), (Pos2) and (Pos3), then
\par\noindent
(i) for all $x=(x_1,\dots,x_n), ~y=(y_1,\dots,y_n) \in \R_+^n$, we have
\begin{equation}\label{eq: pxy2pxpy}
p(\sqrt{x_1y_1}, \ldots, \sqrt{x_ny_n})^2 \le p(x)\cdot p(y);\quad\text{and}
\end{equation}
(ii)
$P$ satisfies the SGCS inequality.
\end{proposition}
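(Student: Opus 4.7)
\textit{Plan for part (i).} The goal is to establish log-convexity of $\tilde p(t) := p(e^{t_1}, \ldots, e^{t_n})$ on $\R^n$. The $d$-homogeneity of $p$ gives $\log\tilde p(t + c\mathbf{1}) = \log\tilde p(t) + dc$, where $\mathbf{1} = (1, \ldots, 1)$; more precisely, $\log\tilde p(t) = d\,t_n + \log \widetilde{p_{n-1,\text{Id}}}(t_1 - t_n, \ldots, t_{n-1} - t_n)$, so the Hessian of $\log\tilde p$ is the pullback of the Hessian of $\log\widetilde{p_{n-1,\text{Id}}}$ under a linear map with kernel $\R\mathbf{1}$. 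To prove the latter Hessian is positive definite, I will invoke Lemma \ref{lem: positiveHessianInInterior} with $f = p_{n-1,\text{Id}}$: positivity on $(\R_+^{n-1})^\circ$ follows from Proposition \ref{prop: Pos1Pos3impliesPpd}(i), the modulus bound in an open neighborhood of $(\R_+^{n-1})^\circ$ follows by restricting \eqref{eq:  pinequalitycn}, and the Newton polytope of $p_{n-1,\text{Id}}$ has full dimension by Lemma \ref{lem: Zspan}(i). The resulting convexity of $\log\tilde p$ on $\R^n$ (strict transverse to $\R\mathbf{1}$), applied via midpoint convexity at $t = (\log x_i)$ and $s = (\log y_i)$, yields \eqref{eq: pxy2pxpy} for $x, y \in (\R_+^n)^\circ$; continuity of $p$ extends the inequality to all of $\PosOrthant$.

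\textit{Plan for part (ii).} Let $z, w \in \C^n$ be linearly independent, and set $\zeta := (z_1\conj{w_1}, \ldots, z_n\conj{w_n})$, so $P(z, \conj{w}) = p(\zeta)$ and $|\zeta_i| = |z_i||w_i|$. Chaining \eqref{eq:  pinequalitycn} with part (i) gives
\[
|P(z, \conj{w})|^2 = |p(\zeta)|^2 \le p(|\zeta_1|, \ldots, |\zeta_n|)^2 \le P(z, \conj{z})\, P(w, \conj{w}).
\]
To promote this to strict inequality, I will case-split. If $\zeta \notin U(1)\cdot\PosOrthant$, (Pos3) makes the first step strict. Otherwise the nonzero $\zeta_i$ share a common argument; writing $K := \{i : z_i w_i \ne 0\}$, if $K = \emptyset$ then $P(z,\conj{w}) = 0$ while $P(z,\conj{z}), P(w,\conj{w}) > 0$ by Proposition \ref{prop: Pos1Pos3impliesPpd}(ii), and if $\mathrm{supp}(z) = \mathrm{supp}(w)$ (both equal to $K$) then linear independence forces $|K| \ge 2$, and restricting $p$ to the coordinate subspace $\{x_j = 0 : j \notin K\}$ --- a homogeneous polynomial inheriting (Pos1)--(Pos3) --- reduces to the fully interior setting, where the strict log-convexity from part (i) forces $|z|_K^2 = \lambda |w|_K^2$ for some $\lambda > 0$, contradicting linear independence via the common-argument condition.

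\textit{Main obstacle.} The remaining subcase --- $\zeta \in U(1)\cdot\PosOrthant$ with $\mathrm{supp}(z) \ne \mathrm{supp}(w)$ --- is the most delicate, since a direct perturbation of $z, w$ yields only non-strict SGCS in the limit. Choosing $i_0$ with $z_{i_0} \ne 0 = w_{i_0}$ and setting $x := (|z_i|^2), y := (|w_i|^2), u := (|z_i||w_i|)$, part (i) applied to the one-parameter perturbation $(x, y + t\basis{i_0})$ yields
\[
p\bigl(u + \sqrt{x_{i_0} t}\,\basis{i_0}\bigr)^2 \le p(x)\, p\bigl(y + t\basis{i_0}\bigr) \quad\text{for all } t \ge 0.
\]
A Puiseux expansion in $\sqrt t$ shows that the left-hand side gains a $\sqrt t$-term with coefficient $2 p(u)\, (\partial p/\partial x_{i_0})(u)\, \sqrt{x_{i_0}}$, strictly positive by Proposition \ref{prop: Pos1Pos3impliesPpd}(i) (as $u \ne 0$ since $K \ne \emptyset$) and by (Pos2) (as $u \in F_{i_0}(\PosOrthant) \setminus \{0\}$), while the right-hand side is smooth in $t$. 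Hence equality at $t = 0$ would cause the inequality to fail for small $t > 0$, so the second step of the chain must be strict in this subcase, completing the proof of SGCS.
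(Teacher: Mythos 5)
Your proposal is correct, and it is worth comparing carefully with the paper's proof. Part (i) is essentially the paper's own argument: dehomogenize to $p_{n-1,\mathrm{Id}}$, verify the hypotheses of Lemma \ref{lem: positiveHessianInInterior} via Lemma \ref{lem: Zspan}(i) and the inequality $|p(z)|\le p(|z_1|,\ldots,|z_n|)$ on all of $\C^n$, deduce log-convexity, and recover \eqref{eq: pxy2pxpy} by homogeneity and continuity; whether one phrases the convexity on $\R^{n}$ (your pullback through the map with kernel $\R\mathbf{1}$) or on $\R^{n-1}$ (the paper's choice) is immaterial. In part (ii) you use the same two-step chain $|p(\zeta)|^2\le p(|\zeta_1|,\ldots,|\zeta_n|)^2\le P(z,\conj{z})P(w,\conj{w})$ with $\zeta_i=z_i\conj{w_i}$, but your strictness analysis is genuinely finer than the paper's. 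The paper simply asserts that linear independence of $z,w$ forces $\zeta\notin U(1)\cdot\PosOrthant$ and then gets strictness from (Pos3); that assertion is false as stated (take $z=(2,1,0,\ldots,0)$ and $w=(1,1,0,\ldots,0)$, which are linearly independent while $\zeta=(2,1,0,\ldots,0)\in\R_+^n$), so for such pairs the strict inequality must come from the second step, i.e.\ from an equality-case analysis of (i) which the paper does not spell out. Your case split supplies exactly this: when $\mathrm{supp}(z)=\mathrm{supp}(w)$ you use the strict transverse convexity (equality forces $(|z_i|^2)_i\propto(|w_i|^2)_i$, hence $z\propto w$ once the common-argument condition is taken into account, after checking that the restriction of $p$ to the coordinate subspace inherits (Pos1)--(Pos3), which it does); when the supports differ you use the $\sqrt{t}$-perturbation whose leading coefficient $2\,p(u)\,\tfrac{\partial p}{\partial x_{i_0}}(u)\sqrt{x_{i_0}}$ is positive by (Pos2) and Proposition \ref{prop: Pos1Pos3impliesPpd}; and the degenerate case $K=\emptyset$ is trivial since $P(z,\conj{w})=0$. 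Thus your longer route buys a complete treatment of precisely the configurations the paper's one-line claim glosses over; the only cosmetic omission is the phrase ``after possibly interchanging $z$ and $w$'' when choosing $i_0$ with $z_{i_0}\neq 0=w_{i_0}$, which is harmless because the inequality to be proved is symmetric in $z$ and $w$.
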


\begin{proof}
First we recall from Proposition \ref{prop: Pos1Pos3impliesPpd}
	that $p(x) > 0$ for all $x \in (\R_+^n)^{\circ}$.
Write $f := p_{n - 1, \mathrm{Id}}$
	where $p_{n - 1,\mathrm{Id}}$ is as in the proof of Proposition \ref{prop: StronglyPseudoconvex}
	(cf. also \eqref{eq: definepmsigma}),
			so that $f(s_1, \ldots, s_{n - 1}) = p (s_1, \ldots, s_{n - 1}, 1)$.
As in \eqref{eq: tildef},
	we consider the associated function $\widetilde f:\R^{n - 1}\to \R$
		given by 
		\begin{equation}\label{eq: tildefp}
		\tilde{f}(t_1, \ldots, t_{n - 1}) := f(e^{t_1}, \ldots, e^{t_{n - 1}})=p (e^{t_1}, \ldots, e^{t_{n - 1}}, 1).
		\end{equation}		
By Lemma \ref{lem: Zspan}(i),
	$N(f)$
		has affine dimension $n - 1$.
It also follows from \eqref{eq:  pinequalitycn}
	that $|f(z_1, \ldots, z_{n - 1})| \le f(|z_1|, \ldots, |z_{n - 1}|)$
		for all $(z_1, \ldots, z_{n - 1}) \in \C^{n - 1}$.			
Hence, by Lemma \ref{lem: positiveHessianInInterior} and \eqref{eq: HessJlogf},
	the Hessian matrix 
		$\displaystyle\Big( \frac{\partial^2}{\partial t_i \partial t_j} \log \tilde{f}(t)\Big)_{1 \le i, j \le n - 1}$			
		is positive definite for all $t \in \R^{n - 1}$,
			and it follows that $\log \tilde{f}$ is a convex function on $\R^{n - 1}$
				(see e.g. \cite[p. 37]{BL06}).   
In particular,
	we have 
$
\tilde{f}( \frac{t + t^\prime}{2}) \le \frac{1}{2}(\tilde{f}(t) + \tilde{f}(t^\prime))
$  for all $t=(t_1,\dots,t_{n-1}),~ t^\prime=(t_1^\prime,\dots,t_{n-1}^\prime) \in \R^{n - 1}$.
By letting $t_i=\log s_i,~t_i^\prime=\log s_i^\prime$ for each $i$, 
	it follows that we have
\begin{equation} \label{eq: convexEqn}
\log p(\sqrt{s_1 s_1^\prime}, \ldots, \sqrt{s_{n - 1} s_{n - 1}^\prime} , 1)
	\le \frac{1}{2} (\log p(s, 1) + \log p(s^\prime, 1))
\end{equation}
for all $s = (s_1, \ldots, s_{n - 1}) , s^\prime= (s_1^\prime, \ldots, s^\prime_{n - 1})\in (\R_+^{n - 1})^\circ$.
For any given $x , y \in (\R_+^n)^\circ$,
	by setting $s = (x_1/x_n, \ldots, x_{n - 1}/ x_n)$ and 
		$s^\prime = (y_1/y_n, \ldots, y_{n - 1}/y_n)$
			in \eqref{eq: convexEqn}, 
				and using the homogeneity of $p$,
		one easily sees that the inequality in \eqref{eq: pxy2pxpy}
			holds for such $x,y\in (\R_+^n)^\circ$.
Together with the continuity of $p$,	
	it follows that the inequality in \eqref{eq: pxy2pxpy} actually holds for all $x, y \in \R_+^n$,
	 	and this finishes the proof of (i).
For (ii), we let $z=(z_1,\dots,z_n)$, $w=(w_1,\dots,w_n)\in\C^n$ be linearly independent, which
implies readily that $(z_1 \conj{w_1}, \ldots, z_n \conj{w_n})\in \C^n \setminus (U(1) \cdot \PosOrthant)$.
Hence it follows from (Pos3) and (i) that
\begin{align} \label{eq: SGCSTemp1}
 |p(z_1 \conj{w_1}, \ldots, z_n \conj{w_n})|^2
                    &< p(|z_1||w_2|, \ldots, |z_n||w_n|)^2\\
                    &\leq p(|z_1|^2,\ldots, |z_n|^2) \cdot  p(|w_1|^2,\ldots, |w_n|^2),
                    \notag
\end{align}
which, together with \eqref{eq: defAssocBihomPoly}, imply that $|P(z, \conj{w})|^2
< P(z, \conj{z}) \cdot P(w, \conj{w})$, and this finishes the proof of (ii).
\end{proof}

We conclude this section with the following

\begin{proof}[Proof of Proposition \ref{prop: P3properties}]
Proposition \ref{prop: P3properties} follows directly from
Proposition \ref{prop: Pos1Pos3impliesPpd},
Proposition \ref{prop: StronglyPseudoconvex} and
Proposition \ref{prop: pPSGCS}.
\end{proof}

\section{Proof of Theorem \ref{thm: mainThm}} \label{sec: proofOfTheorem}

\begin{lemma} \label{lem: posCoeffs}
Let $f \in \R[x_1, \ldots, x_n]$ be a nonconstant homogeneous polynomial which has all positive coefficients.
Then 
\begin{equation} \label{eq: positiveOnOrthant}
f(x) > 0\quad \text{for all } x \in \R_+^n \setminus \{0\},
\end{equation}	and $f$ satisfies (Pos1), (Pos2) and (Pos3).
\end{lemma}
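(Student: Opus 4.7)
The plan is to write $f = \sum_{|I| = d} c_I x^I$ with all $c_I > 0$ and, since $f$ is nonconstant and homogeneous, with degree $d \ge 1$; then verify the four assertions one at a time, using the positivity of the coefficients as the uniform engine.

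First, for the statement \eqref{eq: positiveOnOrthant}, fix $x \in \R_+^n \setminus \{0\}$ and pick $j$ with $x_j > 0$. The multi-index $I = d \cdot e_j$ (where $e_j$ is the $j$-th standard basis vector) satisfies $|I| = d$, so $c_I > 0$, and contributes $c_I x_j^d > 0$ to $f(x)$; all other monomials $c_{I'} x^{I'}$ are $\ge 0$ since $x \in \R_+^n$ and $c_{I'} > 0$. Hence $f(x) > 0$. Condition (Pos1) is the special case $x = e_k$, where $f(e_k) = c_{d e_k} > 0$.

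For (Pos2), I would differentiate to obtain $\partial f/\partial x_k = \sum_{I : I_k \ge 1} c_I I_k x^{I - e_k}$, and restrict to $F_k(\R_+^n)$, where $x_k = 0$ kills every term whose original index has $I_k \ge 2$. The surviving polynomial $(\partial f/\partial x_k)|_{x_k = 0}$ has the form $\sum_{I : I_k = 1} c_I x^{I - e_k}$, again with all positive coefficients. It is a homogeneous polynomial in the remaining $n - 1$ variables, and the argument of the first paragraph (applied to this polynomial and to the point $(x_1, \ldots, \hat{x_k}, \ldots, x_n)$, which is nonzero in $\R_+^{n-1}$ whenever $x \in F_k(\R_+^n) \setminus \{0\}$) gives strict positivity.

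For (Pos3), I would start from the triangle inequality $|f(z)| \le \sum_I c_I |z^I| = f(|z_1|, \ldots, |z_n|)$. Since every $c_I > 0$, equality holds if and only if the complex numbers $z^I$ (with $|I| = d$) all have a common argument whenever they are nonzero. Given $z \in \C^n \setminus (U(1) \cdot \R_+^n)$, by definition there exist indices $j \ne k$ with $z_j, z_k \ne 0$ and $\arg(z_j) \not\equiv \arg(z_k) \pmod{2\pi}$. The main (and only delicate) step is to exhibit two multi-indices whose monomials at $z$ are nonzero and have distinct arguments. For $d = 1$ the monomials $z_j$ and $z_k$ already do this; for $d \ge 2$ I would take $I_1 = d\, e_j$ and $I_2 = (d-1) e_j + e_k$, for which $z^{I_1} = z_j^d$ and $z^{I_2} = z_j^{d-1} z_k$ are both nonzero and their arguments differ by $\arg(z_k) - \arg(z_j) \not\equiv 0 \pmod{2\pi}$. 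This violates the equality condition, so the inequality is strict, proving (Pos3) and completing the lemma.
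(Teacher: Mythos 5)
Your proposal is correct and follows essentially the same route as the paper: positivity on the orthant and (Pos1) from a single surviving monomial, (Pos2) from the monomials $x_k x_j^{d-1}$ whose derivative survives on the facet, and (Pos3) via the triangle inequality together with its equality condition, violated by comparing the arguments of $z_j^d$ and $z_j^{d-1}z_k$ (the paper compares $z_k^d$ with $z_k^{d-1}z_j$ to reach the same contradiction). No gaps.
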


\begin{proof}
Write $f = \sum_{|I| = d} b_I x^I $, so that $d\geq 1$ and $b_I > 0$ for all $|I| = d$.
Then one easily sees that \eqref{eq: positiveOnOrthant} holds,
	which, in turn, implies that $f$ satisfies (Pos1).  For (Pos2), 
	we first consider the facet $F_1(\R_+^n)$.
	Let $x = (x_1, \ldots, x_n)\in F_1(\R_+^n) \setminus\{0\}$ be given,
	so that $x_1 = 0$ and $x_j > 0$ for some $1 < j \le n$.
Assume without loss of generality that $j = 2$.
Then 
\begin{equation} \label{eq: mthPowerHasPosDerivative}
\frac{\partial f}{\partial x_1}(x) 
    = \sum_{|I| = d} b_I I_1 x_1^{I_1 - 1} x_2^{I_2} \cdots x_n^{I_n}
	\ge b_{(1, d - 1, 0, \ldots, 0)} \cdot 1 \cdot x_2^{ d - 1} > 0.  
	\end{equation}
	The same argument yields 
the desired inequality on the other $F_k(\R_+^n)$'s, and this
finishes the proof of (Pos2).
For (Pos3), we
let $z = (z_1, \ldots, z_n) \in \C^n \setminus (U(1) \cdot \R_+^n)$ be given,
	so that $z_k \neq 0$ for some $1 \le k \le n$.
Since $b_I > 0$ for all $|I| = d$, we have
\begin{align}
|f(z)| = \big|\sum_{|I| = d} b_I z^I \big |
            \le \sum_{|I| = d} b_I |z_1|^{I_1} \cdots |z_n|^{I_n}
            = f(|z_1|,\ldots, |z_n|). \label{eq: cauchySchwarzPos3}
\end{align}	
If the inequality in \eqref{eq: cauchySchwarzPos3} is in fact an equality, 
then it is easy to see that all the $b_Iz^I$'s (and thus all the $z^I$'s) will have the same argument.
By comparing the arguments of $z_k^{d} $ and $z_k^{d - 1} z_j$ for each $1 \le  j \le n$,
	one sees that all the $z_j$'s have the same argument, contradicting the assumption that 
		$z \in \C^n \setminus (U(1) \cdot \R_+^n)$.  Hence the inequality in 
		\eqref{eq: cauchySchwarzPos3} is strict.
Thus $f$ satisfies (Pos3).		
\end{proof}

We are ready to give the proof of Theorem \ref{thm: mainThm} as follows:

\begin{proof}[Proof of Theorem \ref{thm: mainThm}]  Let $p \in \R[x_1,\dots,x_n]$ be a nonconstant homogeneous polynomial.
In the case when $n=1$, it is easy to see that $p$ satisfies \eqref{thm: Condition} (resp. 
\eqref{thm: EventualPos}) (in Theorem \ref{thm: mainThm}) if and only if $p$ is a monomial with positive coefficient. 
Hence we
only need to consider the case when $n\geq 2$.  
\par\noindent
$\underline{\eqref{thm: Condition}\implies\eqref{thm: EventualPos}}$:  
Suppose $p$ satisfies (Pos1), (Pos2) and (Pos3), and $q \in \R[x_1,\dots,x_n]$ is a homogeneous polynomial
such that $q(x)>0$ for all $x\in \R_+^n\setminus\{0\}$.  Let $P$ and $Q$ be 
the Hermitian bihomogeneous polynomial
associated to $p$ and $q$ respectively.  Then from Proposition \ref{prop: P3properties} and 
Theorem \ref{thm: CDThm}, one knows that  there exists $m_o > 0$ such that for each integer $m \ge m_o$,
            $P^m \cdot Q$ is a maximal squared norm.  By Proposition  \ref{prop: AssocBihomIsSOS},
            it follows that $p^mq$ has all positive coefficients for each such $m$.
            \par\noindent
$\underline{\eqref{thm: EventualPos} \implies \eqref{thm: Condition}}$:
By setting $q = 1$ in \eqref{thm: EventualPos}, 
	one knows that $p^m$ has all positive coefficients for some odd integer $m$. 
From Lemma \ref{lem: posCoeffs} (with $f = p^m$),
	one knows that $p(x)^m>0$ for all $ x \in \R_+^n \setminus \{0\}$, and $p^m$ satisfies (Pos1), (Pos2) and (Pos3).
By taking the $m$-th root, one immediately sees that $p(x)>0$ for all $ x \in \R_+^n \setminus \{0\}$, and 
$p$ also satisfies (Pos1) and (Pos3).
Since $p(x)>0$ for all $ x \in \R_+^n \setminus \{0\}$ and we have
\begin{equation}
\frac{\partial (p^m)}{\partial x_k}(x) = m \cdot p(x)^{m - 1} \cdot \frac{\partial p}{\partial x_k}(x) \quad\text{for each }
1\leq k\leq n,
\end{equation}
it follows readily from  (Pos2) for  $p^m$ that
$p$ also satisfies (Pos2).		
\end{proof}

\section{Application to polynomial spectral radius functions} \label{sec: Application}

In this section, we apply Theorem \ref{thm: mainThm} to 
prove Corollary \ref{cor: betaApplication},
and interpret Corollary \ref{cor: betaApplication} in terms of Markov chains.

Let $B = (B_{ij})$ be a square matrix with $B_{ij}\in\R_+$ for all $i,j$.  We recall that $B$ is said to be
{\emph{irreducible}}  if, for each pair of indices $i$ and $j$, there exists an  integer $k \ge 1$ such that 
$(B^k)_{ij} > 0$.  $B$ is said to be {\emph{aperiodic} if for each $i$, we have 
$\gcd\{ k \in \Z_+ \, \big| \, (B^k)_{ii} > 0 \} = 1$.
By the Perron-Frobenius theorem (see e.g. \cite{Se81}), if the matrix $B$ is irreducible or aperiodic, then its 
spectral radius $\beta(B)$ is positive.

Next we recall that an {\it irreducible}
(resp. {\it aperiodic}) {\it square matrix  $A = (A_{ij})$ over $\Z_+[x_1,\ldots, x_n]$} means that 
$A_{ij}\in \Z_+[x_1,\ldots, x_n]$ for all $i,j$, and for some (and hence all) $x\in (\R_+^n)^\circ$, the 
corresponding matrix $A(x)$ (with entries in $\R_+$) is irreducible (resp. aperiodic).  
In particular, for such $A$, we obtain its spectral radius function
$\beta_A :(\R_+^n)^\circ \to (0, \infty)$ given by $\beta_A(x) := \beta(A(x))$ for $x\in (\R_+^n)^\circ$.

In the probability theory of stochastic processes, 
an irreducible (resp.\ aperiodic) square matrix $A$ over $\Z[x_1, \ldots, x_n]$
defines an irreducible (resp.\ aperiodic) Markov chain $\Sigma_A$.
The spectral radius function $\beta_A$ is also known as the {\it beta function} of $\Sigma_A$ in Tuncel's paper \cite{Tu}
			(see also \cite{Deangelis942}).  
	The beta function is an important topological invariant in Markov shifts 
	(see e.g. \cite{MT93, PS84} and the references therein).
	In such context, Corollary \ref{cor: betaApplication} may be interpreted as a characterization 
	of certain polynomials as the beta functions of some irreducible or aperiodic Markov chains.

First we recall some results of De Angelis:
\begin{lemma}[{\cite[Theorem 3.3(i) (resp. Theorem 3.5)]{Deangelis942}}] 
    \label{lem: DeangelisOnRootOfBetaFns}
Let $p \in \Z[x_1, \ldots, x_n]$.
If there exists $m > 0$ 
    and an irreducible (resp. aperiodic) square matrix $B$ over $\Z_+[x_1, \ldots, x_n]$ 
        such that $p^m = \beta_B$, 
then $p = \beta_A$ 
    for some irreducible (resp. aperiodic) square matrix $A$ over $\Z_+[x_1, \ldots, x_n]$.
\end{lemma}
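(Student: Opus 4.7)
The plan is to construct $A$ explicitly from $B$ using a block matrix of size an integer multiple of that of $B$. Writing $B$ as an $N \times N$ matrix over $\Z_+[x_1,\ldots,x_n]$, consider the $mN \times mN$ block-cyclic matrix
$$
A := \begin{pmatrix} 0 & I_N & & & \\ & 0 & I_N & & \\ & & \ddots & \ddots & \\ & & & 0 & I_N \\ B & & & & 0 \end{pmatrix},
$$
whose entries clearly lie in $\Z_+[x_1,\ldots,x_n]$. A direct block computation shows that $A^m$ is block-diagonal with each diagonal block conjugate to $B$, so the nonzero eigenvalues of $A$ are exactly the $m$-th roots of the nonzero eigenvalues of $B$; consequently $\beta_A(x)^m = \beta_B(x) = p(x)^m$ for $x \in (\R^n_+)^\circ$. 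Since $\beta_A$ is nonnegative and $p$ has constant sign on the connected set $(\R^n_+)^\circ$ by continuity (as $p^m = \beta_B > 0$ there), we obtain $\beta_A = p$, possibly after first noting that the hypothesis forces $p \geq 0$ on $(\R^n_+)^\circ$.

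For the irreducible case, I would then verify that irreducibility transfers from $B$ to $A$ by a direct graph-theoretic argument: any directed path from $(i,r)$ to $(j,s)$ (where $r, s \in \{1,\ldots, m\}$ enumerate block positions) in the support graph of $A$ can be assembled from cyclic traversals of the $I_N$ blocks together with a positive number of applications of $B$, and irreducibility of $B$ guarantees enough such paths.

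The main obstacle is the aperiodic case: the block-cyclic matrix $A$ above has period exactly $m$ (its support graph is $m$-partite), so it is not aperiodic for $m > 1$. The strategy is to modify $A$ by introducing additional arrows that break the period-$m$ structure while preserving the identity $\beta_A = p$ and the requirement that entries lie in $\Z_+[x_1,\ldots,x_n]$. Concretely, aperiodicity of $B$ provides, for each index $i$, two integers $k_1, k_2$ with $\gcd(k_1,k_2)=1$ and $(B^{k_1})_{ii}, (B^{k_2})_{ii} > 0$, giving cycles in the graph of $B$ whose lengths have $\gcd = 1$; these may be used to insert a correcting edge in $A$ that yields $\gcd = 1$ among the cycle lengths through some vertex of $A$, hence aperiodicity.

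The delicate part is that any such added edge alters the characteristic polynomial and potentially the spectral radius function, so the modification must be performed in a way that preserves the spectrum of $A$ exactly. One natural tool is the theory of strong shift equivalence of nonnegative integer matrices: elementary equivalences $M = RS \leftrightarrow SR = N$ preserve the nonzero spectrum, and one can hope to connect the cyclic model above to an aperiodic representative within the same shift equivalence class. Alternatively, one can replace the ambient matrix ring and work with a larger aperiodic block model whose $m$-th power is shift equivalent to $B$. Carrying either of these out rigorously, while staying inside $\Z_+[x_1,\ldots,x_n]$, is the substantive technical content of the lemma.
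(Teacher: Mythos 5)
The paper does not actually prove this lemma: it is imported wholesale from De Angelis \cite[Theorems 3.3(i) and 3.5]{Deangelis942}, so there is no internal argument to compare yours against, and your attempt has to stand on its own. The irreducible half of your construction does stand: $A^m$ is block-diagonal with each diagonal block equal to $B$, so $\beta_A(x)^m=\beta_B(x)=p(x)^m$ on $(\R_+^n)^\circ$, and the cyclic-cover graph argument does transfer irreducibility from $B$ to $A$. One caveat: the hypothesis does \emph{not} force $p\ge 0$ when $m$ is even (take $p=-(x_1+x_2)$, $m=2$, $B$ the $1\times 1$ matrix $((x_1+x_2)^2)$), so what your construction literally yields is $\beta_A=|p|$; positivity of $p$ on the open orthant has to come from the context in which the lemma is applied (in the paper, from (Pos1) together with $p\neq 0$ on the connected set $(\R_+^n)^\circ$), not from the hypothesis of the lemma as you claim.

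The genuine gap is the aperiodic case, which you explicitly leave open, and which is precisely the half the paper needs to get statement (iii) of Corollary \ref{cor: betaApplication}. Moreover, the repair you sketch cannot work in the form you describe: for an irreducible nonnegative matrix, strictly increasing any entry (in particular, inserting a new nonzero ``correcting edge'') strictly increases the Perron root, by the standard monotonicity in the Perron-Frobenius theorem; hence at every $x\in(\R_+^n)^\circ$ such a modification strictly increases $\beta_A(x)$, and no added edge can simultaneously break the period-$m$ structure and ``preserve the spectrum of $A$ exactly.'' Any honest fix must change the matrix itself (state splitting, strong shift equivalence over $\Z_+[x_1,\ldots,x_n]$, or De Angelis' own characterization of polynomial beta functions), and none of this is carried out. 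As it stands, the proposal proves only the irreducible assertion; the aperiodic assertion --- the deeper of the two cited results --- remains unproven.
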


\begin{lemma}[{\cite[Theorem 6.6]{Deangelis942}}] \label{lem: deAngelisStarCondition}
Let $q \in \R[s_1, \ldots, s_\ell]$.
Suppose that the set 
$
S_q : =\{I - J \, \big| \, I, J \in \Log (q)\}
$ generates $\Z^\ell$ as a $\Z$-module, and $q = \beta_A$ for some square matrix $A$ over 
$\Z_+[s_1, \ldots, s_\ell]$.
   Then
$
|q(z)| < q(|z_1|, \ldots, |z_\ell|) $ for all $z=(z_1,\dots,z_\ell)\in \C^\ell\setminus \R_+^\ell$.
\end{lemma}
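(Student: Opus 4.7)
The plan is to prove the weak inequality $|q(z)|\le q(|z_1|,\ldots,|z_\ell|)$ on all of $\C^\ell$ first, and then analyze the equality case, ultimately forcing $z\in\R_+^\ell$ via the hypothesis on $S_q$.

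For the weak inequality, I would first show that $q(z)$ is always an eigenvalue of $A(z)$. Setting $\chi(\lambda,s):=\det(\lambda I-A(s))\in\Z[\lambda,s_1,\ldots,s_\ell]$, the identity $\chi(q(s),s)=0$ holds on $(\R_+^\ell)^\circ$ because $q(s)=\beta_A(s)$ is a genuine eigenvalue of $A(s)$ there; since this identity is polynomial and $(\R_+^\ell)^\circ$ is a real slice of full real dimension $\ell$, it extends to all $z\in\C^\ell$. Hence $|q(z)|\le\rho(A(z))$. On the other hand, each $A_{ij}\in\Z_+[s_1,\ldots,s_\ell]$ has non-negative coefficients, so $|A_{ij}(z)|\le A_{ij}(|z_1|,\ldots,|z_\ell|)$ entrywise, and the standard entrywise-dominance comparison for spectral radii of non-negative matrices yields $\rho(A(z))\le\rho(A(|z_1|,\ldots,|z_\ell|))=q(|z_1|,\ldots,|z_\ell|)$. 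Chaining these bounds gives the weak inequality.

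Now suppose for contradiction that equality $|q(z)|=q(|z|)$ holds for some $z\in\C^\ell\setminus\R_+^\ell$, where $|z|:=(|z_1|,\ldots,|z_\ell|)$. Then both inequalities above are saturated. Applying Wielandt's sharpening of Perron-Frobenius to the irreducible block of $A(|z|)$ carrying the Perron eigenvalue, the saturation $\rho(A(z))=\rho(A(|z|))$ combined with the entrywise bound $|A(z)|\le A(|z|)$ forces $A(z)=\omega\,D\,A(|z|)\,D^{-1}$ for some $\omega\in U(1)$ and some diagonal unitary matrix $D=\mathop{\mathrm{diag}}(\eta_1,\ldots,\eta_N)$. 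Writing $z_k=|z_k|e^{i\phi_k}$ and expanding $A_{ij}(s)=\sum_K c_K^{(ij)}s^K$ with $c_K^{(ij)}\ge 0$, this matrix identity forces all monomials $c_K^{(ij)}z^K$ for $K$ in the support of each $A_{ij}$ to carry a common argument, i.e., $\langle K-K',\phi\rangle\in 2\pi\Z$ for all $K,K'\in\mathrm{supp}(A_{ij})$ and all $(i,j)$.

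The third step is to bootstrap these phase congruences from the entry supports $\mathrm{supp}(A_{ij})$ to $\Log(q)$. For this I would invoke the trace identities $\mathrm{tr}(A(z)^k)=\omega^k\,\mathrm{tr}(A(|z|)^k)$ for all $k\ge 1$, expand traces as sums over closed cycles in the digraph of $A$, and then use the Newton-Girard relations together with $\chi(q(s),s)=0$ to identify $\Log(q)$ as lying in the $\Z$-span of such cycle-exponent sums along the Perron branch. Consequently every element of $S_q$ pairs with $\phi$ into $2\pi\Z$; since $S_q$ generates $\Z^\ell$ as a $\Z$-module by hypothesis, each $\phi_k\in 2\pi\Z$, so $z\in\R_+^\ell$, contradicting the choice of $z$. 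The principal obstacle is precisely this final transfer: moving the phase congruences from $\mathrm{supp}(A_{ij})$ to $\Log(q)$ through the Perron branch of $\chi$ requires a careful tracking of how the spectral-radius eigenvalue is algebraically encoded in the entries of $A$, and is the technical core of De Angelis' original argument in \cite{Deangelis942}.
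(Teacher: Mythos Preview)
The paper does not prove this lemma; it is quoted verbatim as \cite[Theorem 6.6]{Deangelis942} and used as a black box in the proof of Corollary~\ref{cor: betaApplication}. So there is no ``paper's own proof'' to compare against.

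As for your sketch itself: the first two movements (the weak inequality via $\chi(q(s),s)\equiv 0$ plus entrywise dominance of spectral radii, and the Wielandt equality analysis giving $A(z)=\omega D A(|z|)D^{-1}$) are standard and sound, and indeed mirror the architecture of De~Angelis' argument. The genuine gap is exactly where you flag it: passing the phase congruences from the supports of the entries $A_{ij}$ to $\Log(q)$. Your proposed route through $\mathrm{tr}(A(z)^k)=\omega^k\,\mathrm{tr}(A(|z|)^k)$ and Newton--Girard is not quite enough as stated, because Newton's identities relate power sums to the coefficients of $\chi$, not directly to the monomial support of the particular root $q$; one needs instead to argue that $q(z)=\omega\,q(|z|)$ (via the spectral correspondence induced by the similarity) and then vary $z$ over a torus orbit $(e^{i\phi_1}s_1,\ldots,e^{i\phi_\ell}s_\ell)$ with $s\in(\R_+^\ell)^\circ$ to extract the congruences $\langle I,\phi\rangle\equiv\langle J,\phi\rangle\pmod{2\pi}$ for $I,J\in\Log(q)$ from the polynomial identity. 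You correctly identify this as the crux; as written it is a plausible outline rather than a proof.
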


Finally we give the deduction of Corollary \ref{cor: betaApplication} as follows:

\begin{proof}[Proof of Corollary \ref{cor: betaApplication}]
Let $p \in \Z[x_1, \ldots, x_n]$ be a homogeneous polynomial which satisfies (Pos1) and (Pos2).
\par\noindent
$\underline{\eqref{cor: irr}  \implies \eqref{cor: pos3} ~(\text{resp. }
\eqref{cor: aper} \implies \eqref{cor: pos3} } $):
Suppose that there exists an irreducible
(resp. aperiodic) square matrix $A$ over $\Z_+[x_1, \ldots, x_n]$ such that $p = \beta_A$.
Let $p_{n - 1, \mathrm{Id}}$ be as in \eqref{eq: definepmsigma},
	so that
	\begin{equation} \label{eq: formulaForP}
	p_{n - 1, \mathrm{Id}}( s_1, \ldots, s_{n - 1}) = p(s_1, \ldots, s_{n -1}, 1).
	\end{equation}
Then $p_{n - 1, \mathrm{Id}} \in \Z [s_1, \ldots, s_{n - 1}]$
	and $p_{n - 1,\mathrm{Id}} = \beta_B$,
		where $B$ is the matrix over $\Z_+[s_1,\ldots, s_{n - 1}]$
			given by $B(s_1, \ldots, s_{n- 1}) = A(s_1, \ldots, s_{n - 1}, 1)$.
Furthermore, it follows from Lemma \ref{lem: Zspan}(ii) 
	that $S_{p_{n - 1, \mathrm{Id}}}$ generates $\Z^{ n - 1}$ as a $\Z$-module.
Thus by Lemma \ref{lem: deAngelisStarCondition}
	and \eqref{eq: formulaForP},
		we have
\begin{equation} \label{eq: hash}
|p(z_1, \ldots, z_{n - 1}, 1)| < p(|z_1|, \ldots, |z_{n - 1}|, 1)\quad
	\text{for all } (z_1, \ldots, z_{n - 1}) \in \C^{n - 1} \setminus  \R_+^{n-1}.
\end{equation}					
Next, we let $z = (z_1, \ldots, z_{n}) \in \C^n \setminus (U(1) \cdot \R_+^n)$, so that 
the $z_i$'s do not have the same argument.	
By permuting the coordinate functions, we may assume without loss of generality
	that $z_n \neq 0$, so that $(z_1/z_n, \ldots, z_{n - 1}/ z_n) \in \C^{n - 1} \setminus  \R_+^{n-1}$.		
Then it follows from \eqref{eq: hash}
	(with $z_i$ there replaced by $z_i/z_n$)
	that
	\begin{align}
	 \big| p\big(\frac{z_1}{z_n}, \ldots, \frac{z_{n - 1}}{z_n} , 1\big)\big| 
	 	< p\big(\big|\frac{z_1}{z_n}\big| ,\ldots, \big|\frac{z_{n - 1}}{z_n}\big|, 1\big) 
	\implies  |p(z)|  < p(|z_1|, \ldots, |z_n|), 
	\end{align}
	where the implication follows from the homogenity of $p$.
	Hence $p$ satisfies (Pos3).
\par\noindent
$\underline{\eqref{cor: pos3} \implies \eqref{cor: irr}~(\text{resp. }\eqref{cor: pos3} 
\implies \eqref{cor: aper})}$: Suppose that $p$ also satisfies (Pos3).
Then by Theorem \ref{thm: mainThm} (with $q = 1$ in \eqref{thm: EventualPos}),
	there exists $m > 0$ such that $p^m$ has all positive coefficients.
Since $p^m$ is nonzero, 
    the $1 \times 1$ matrix $B := (p^m)$ is irreducible (resp. aperiodic)  over $\Z_+[x_1,\dots,x_n]$, 
    and $\beta_B = p^m$.
Hence by Lemma \ref{lem: DeangelisOnRootOfBetaFns},
    there exists 
        an irreducible (resp. aperiodic) 
            square matrix $A$ over $\Z_+[x_1, \ldots, x_n]$ 
                such that $p = \beta_A$.
\end{proof}

\end{document}